\newmdtheoremenv[%
backgroundcolor=green!10,%
outerlinecolor=black,%
leftmargin=0,%
rightmargin=0,
innertopmargin =3pt,%
innerleftmargin = 5pt,
innerrightmargin = 5pt,
splittopskip = \topskip,%
skipabove = \baselineskip,%
skipbelow = \baselineskip,%
roundcorner=5, ntheorem]
{theorem}{Theorem}[section]
\newtheorem{corollary}{Corollary}[section]
\newtheorem{proposition}{Proposition}[section]
\newtheorem{definition}{Definition}[section]
\newtheorem{remark}{Remark}[section]
\newenvironment{proof}{{\noindent\it Proof.}\quad}{\hfill $\square$\\}
\begin{document}


\title{The springback penalty for robust signal recovery}

\author{Congpei An\footnotemark[1]
       \quad\quad Hao-Ning Wu\footnotemark[2]
       \quad\quad  Xiaoming Yuan\footnotemark[2]}

\renewcommand{\thefootnote}{\fnsymbol{footnote}}
\footnotetext[1]{School of Economic Mathematics, Southwestern University of Finance and Economics, Chengdu, China (ancp@swufe.edu.cn).}
\footnotetext[2]{Department of Mathematics, The University of Hong Kong, Hong Kong, China (\{hnwu,xmyuan\}@hku.hk).}


\maketitle


\begin{abstract}
We propose a new penalty, the springback penalty, for constructing models to recover an unknown signal from incomplete and inaccurate measurements. Mathematically, the springback penalty is a weakly convex function. It bears various theoretical and computational advantages of both the benchmark convex $\ell_1$ penalty and many of its non-convex surrogates that have been well studied in the literature. We establish the exact and stable recovery theory for the recovery model using the springback penalty for both sparse and nearly sparse signals, respectively, and derive an easily implementable difference-of-convex algorithm. In particular, we show its theoretical superiority to some existing models with a sharper recovery bound for some scenarios where the level of measurement noise is large or the amount of measurements is limited. We also demonstrate its numerical robustness regardless of the varying coherence of the sensing matrix. The springback penalty is particularly favorable for the scenario where the incomplete and inaccurate measurements are collected by coherence-hidden or -static sensing hardware due to its theoretical guarantee of recovery with severe measurements, computational tractability, and numerical robustness for ill-conditioned sensing matrices.
\end{abstract}

\textbf{Keywords.} {signal recovery, compressed sensing, penalty, weakly convex, difference-of-convex algorithm}\\


\section{Introduction}
Signal recovery aims at recovering an unknown signal from its measurements, which are often incomplete and inaccurate due to technical, economical, or physical restrictions. Mathematically, a signal recovery problem can be expressed as estimating an unknown $\bar{x}\in\mathbb{R}^n$ from an underdetermined linear system
\begin{equation}\label{equ:linearsystem}
b = A\bar{x}+e,
\end{equation}
where $A\in\mathbb{R}^{m\times n}$ is a full row-rank sensing matrix such as a projection or transformation matrix (see, e.g., \cite{bruckstein2009sparse,candes2005error,candes2005decoding}), $b\in\mathbb{R}^m\setminus\{0\}$ is a vector of measurements, $e\in\mathbb{R}^m$ is some unknown but bounded noise perturbation in $\mathcal{B}(\tau):=\{e\in\mathbb{R}^m:\|e\|_2\leq\tau\}$, and the number $m$ of measurements is considerably smaller than the size $n$ of the signal $\bar{x}$. The set $\mathcal{B}(\tau)$ encodes both the cases of noise-free ($\tau=0$) and noisy ($\tau>0$) measurements.

Physically, a signal of interest, or its coefficients under certain transformation, is often sparse (see, e.g., \cite{bruckstein2009sparse}). Hence, it is natural to seek a sparse solution to the underdetermined linear system \eqref{equ:linearsystem}, though it has infinitely many solutions. We say that $x\in\mathbb{R}^n$ is \emph{$s$-sparse} if $\|x\|_0\leq s$, where $\|x\|_0$ counts the number of nonzero entries of $x$. To find the sparsest solution to \eqref{equ:linearsystem}, one may consider solving the following minimization problem:
\begin{equation}\label{equ:l0}
\min_{x\in\mathbb{R}^n}~\|x\|_0\quad\text{s.t.}\quad Ax-b\in\mathcal{B}(\tau),
\end{equation}
in which $\|x\|_0$ serves as a penalty term of the sparsity, and it is referred to as the $\ell_0$ penalty for convenience. Due to the discrete and discontinuous nature of the $\ell_0$ penalty, the model \eqref{equ:l0} is NP-hard \cite{bruckstein2009sparse}. This means the model \eqref{equ:l0} is computationally intractable, and this difficulty has inspired many alternatives to the $\ell_0$ penalty in the literature. A fundamental proxy of the model \eqref{equ:l0} is the basis pursuit (BP) problem proposed in  \cite{chen2001atomic}:
\begin{equation}\label{equ:l1}
\min_{x\in\mathbb{R}^n}~\|x\|_1\quad\text{s.t.}\quad Ax-b\in\mathcal{B}(\tau).
\end{equation}
In this convex model, $\|x\|_1:=\sum_{i=1}^n|x_i|$ and it is called the $\ell_1$ penalty hereafter. Recall that $\|x\|_1$ is the convex envelope of $\|x\|_0$ (see, e.g., \cite{MR0274683}), and it induces sparsity most efficiently among all convex penalties (see \cite{bruckstein2009sparse}). The BP problem \eqref{equ:l1} has been intensively studied in voluminous papers since the seminal works \cite{candes2006stable,candes2005error,donoho2006compressed}, in which various conditions have been comprehensively explored for the exact recovery via the convex model \eqref{equ:l1}.

The BP problem \eqref{equ:l1} is fundamental for signal recovery, but its solution may be over-penalized because the $\ell_1$ penalty tends to underestimate high-amplitude components of the solution, as analyzed in \cite{fan2001variable}. Hence, it is reasonable to consider non-convex alternatives to the $\ell_1$ penalty and upgrade the model \eqref{equ:l1} to achieve a more accurate recovery. In the literature, some non-convex penalties have been well studied, such as the smoothly clipped absolute deviation (SCAD) \cite{fan2001variable}, the capped $\ell_1$ penalty \cite{MR2629825}, the transformed $\ell_1$ penalty \cite{lv2009unified,zhang2018minimization}, and the $\ell_p$ penalty with $0<p<1$ \cite{chartrand2007exact,chartrand2008restricted,MR2765490}. Besides, one particular penalty is the minimax concave penalty (MCP) proposed in \cite{zhang2010nearly}, and it has been widely shown to be effective in reducing the bias from the $\ell_1$ penalty \cite{zhang2010nearly}. Moreover, the so-called $\ell_{1-2}$ penalty has been studied in the literature, e.g. \cite{esser2013method,yan2017sparse,yin2015minimization}, to mention a few. Some of these penalties will be summarized in Section \ref{sec:sb}. In a nutshell, convex penalties are more tractable in the senses of theoretical analysis and numerical computation, while they are less effective for achieving the desired sparsity (i.e., the approximation to the $\ell_0$ penalty is less accurate). Non-convex penalties are generally the opposite.

Considering the pros and cons of various penalties, we are motivated to find a weakly convex penalty that can keep some favorable features from both the $\ell_1$ penalty and its non-convex alternatives, and the resulting model for signal recovery is preferable in the senses of both theoretical analysis and numerical computation.
More precisely, we propose the \emph{springback} penalty
\begin{equation}\label{equ:spb}
\mathcal{R}^{\text{SPB}}_{\alpha}(x):=\|x\|_1-\frac{\alpha}{2}\|x\|_2^2,
\end{equation}
where $\alpha>0$ is a model parameter, and it should be chosen meticulously. We will show later that a larger $\alpha$ implies a tighter stable recovery bound. On the other hand, a too large $\alpha$ may lead to negative values of $\mathcal{R}^{\text{SPB}}_{\alpha}(x)$. Thus, a reasonable upper bound on $\alpha$ should be considered to ensure the well-definedness of the springback penalty \eqref{equ:spb}. In the following, we will see that if the matrix $A$ is well-conditioned (e.g., when $A$ is drawn from a Gaussian matrix ensemble), then the requirement on $\alpha$ is quite loose; while if $A$ is ill-conditioned (e.g., $A$ is drawn from an oversampled partial DCT matrix ensemble), then generally the upper bound on $\alpha $ should be better discerned for the sake of designing an algorithm with theoretically provable convergence. We refer to Theorem \ref{thm:sparse}, Theorem \ref{thm:nonsparse}, Section \ref{sec:convergence}, and Section \ref{sec:alpha} for more detailed discussions on the determination of $\alpha$ for the springback penalty \eqref{equ:spb} theoretically and numerically. With the springback penalty \eqref{equ:spb}, we propose the following model for signal recovery:
\begin{equation}\label{equ:CSproblem}
\min_{x\in\mathbb{R}^n}~\mathcal{R}^{\text{SPB}}_{\alpha}(x)\quad\text{s.t.}\quad Ax-b\in\mathcal{B}(\tau).
\end{equation}
Mathematically, the springback penalty \eqref{equ:spb} is a weakly convex function, and thus the \emph{springback-penalized model} \eqref{equ:CSproblem} can be intuitively regarded as an ``average" of the convex BP model \eqref{equ:l1} and the mentioned non-convex surrogates. Recall that a function $f:\mathbb{R}^n\rightarrow\mathbb{R}$ is \emph{$\alpha$-weakly convex} if $x\mapsto f(x)+\frac{\alpha}{2}\|x\|_2^2$ is convex. One advantage of the model \eqref{equ:CSproblem} is that various results developed in the literature on weakly convex optimization problems (e.g., \cite{guo2017convergence,mollenhoff2015primal}) can be used for both theoretical analysis and algorithmic design. Indeed, the weak convexity of the springback penalty \eqref{equ:spb} enables us to derive sharper recovery results with fewer measurements and to design some efficient algorithms easily.

The rest of this paper is organized as follows. In the next section, we summarize some preliminaries for further analysis. In Sections \ref{sec:signal} and \ref{sec:signal2}, we establish the exact and stable recovery theory of the springback-penalized model \eqref{equ:CSproblem} for sparse and nearly sparse signals, respectively. We also theoretically compare the springback penalty \eqref{equ:spb} with some other penalties in these two sections. In Section \ref{sec:computing}, we design a difference-of-convex algorithm (DCA) for the springback-penalized model \eqref{equ:CSproblem} and study its convergence. Some numerical results are reported in Section \ref{sec:numerical} to verify our theoretical assertions, and some conclusions are drawn in Section \ref{sec:conclusions}.

\section{Preliminaries}\label{sec:sb}
In this section, we summarize some preliminaries that will be used for further analysis.

\subsection{Notations}
For any $x,y\in\mathbb{R}^n$, let $\left<x,y\right>=x^{\text{T}}y$ be their inner product, and let $\text{supp}(x):=\{1\leq i\leq n:x_i\neq 0\}$ be the support of $x$. Let $I$ be an identity matrix whose dimension is clear in accordance with the context. Let $\Lambda\subseteq\{1,2,\ldots,n\}$ (or $\Lambda$ with some super/subscripts) be an index set, and $|\Lambda|$ the cardinality of $\Lambda$. For $x\in\mathbb{R}^n$ and $A\in\mathbb{R}^{m\times n}$, let $x_{\Lambda}\in\mathbb{R}^{n}$ be the vector with the same entries as $x$ on indices $\Lambda$ and zero entries on indices $\Lambda^c$, and let $A_{\Lambda}\in\mathbb{R}^{m\times |\Lambda|}$ be the submatrix of $A$ with column indices $\Lambda$. For $x\in\mathbb{R}$, $\text{sgn}(x)$ is the sign function of $x$. For a convex function $f$, $\partial(f(x))$ denotes the subdifferential of $f$ at $x$.

\subsection{A glance at various penalties}
In the literature, there are a variety of convex and non-convex penalties. Below we list six of the most important ones, with $x\in\mathbb{R}^n$.

\begin{itemize}
       \item[$\diamond$] The $\ell_1$ penalty \cite{bruckstein2009sparse,chen2001atomic}:
       \begin{equation*}
       \mathcal{R}^{\ell_1}(x):=\|x\|_1=\sum_{i=1}^n|x_i|.
       \end{equation*}
       \item[$\diamond$] The elastic net penalty \cite{MR2137327}:
       \begin{equation*}
       \mathcal{R}^{\text{EL}}(x):=\|x\|_1+\frac{\alpha}{2}\|x\|_2^2=\sum_{i=1}^n|x_i|+\frac{\alpha}{2}\sum_{i=1}^n|x_i|^2.
       \end{equation*}
       \item[$\diamond$] The $\ell_p$ penalty with parameter $0<p<1$ \cite{chartrand2007exact,chartrand2008restricted}:
       \begin{equation*}
       \mathcal{R}^{\ell_p}(x):=\|x\|_p^p=\sum_{i=1}^n|x_i|^p.
       \end{equation*}
       \item[$\diamond$] The transformed $\ell_1$ (TL1) with parameter $\beta>0$ \cite{lv2009unified,zhang2018minimization}:
       \begin{equation*}
       \mathcal{R}^{\text{TL1}}_{\beta}(x):=\sum_{i=1}^n\frac{(\beta+1)|x_i|}{\beta+|x_i|}.
       \end{equation*}
       \item[$\diamond$] The minimax concave penalty (MCP) with parameter $\mu>0$ \cite{zhang2010nearly}:
       \begin{equation}\label{equ:MCP}
       \mathcal{R}^{\text{MCP}}_{\mu}(x):=\sum_{i=1}^n\phi^{\text{MCP}}_{\mu}(x_i),
       \end{equation}
              where
              \begin{equation*}
              \phi^{\text{MCP}}_{\mu}(x_i)=\begin{cases}
              |x_i|-x_i^2/(2\mu),&|x_i|\leq\mu,\\
              \mu/2,&|x_i|\geq\mu.
              \end{cases}
              \end{equation*}
       \item[$\diamond$] The $\ell_{1-2}$ penalty \cite{esser2013method,yin2015minimization}:
              \begin{equation*}
              \mathcal{R}^{\ell_{1-2}}(x):=\|x\|_1-\|x\|_2=\sum_{i=1}^n|x_i|-\sqrt{\sum_{i=1}^n|x_i|^2}.
              \end{equation*}
\end{itemize}
Note that the $\ell_1$ penalty is convex, the elastic net penalty is strongly convex, and the others are non-convex.

\subsection{Relationship among various penalties}\label{sec:relation}
For any nonzero vector $x\in \mathbb{R}^n$ and $\alpha>0$, the springback penalty $\mathcal{R}^{\text{SPB}}_{\alpha}(x)\rightarrow\mathcal{R}^{\ell_1}(x)$ as $\alpha\rightarrow 0$. Besides, $\mathcal{R}^{\text{SPB}}_{\alpha}(x)$ is reduced to the MCP in \cite{zhang2010nearly} within the $\ell_{\infty}$-ball $\{x\in\mathbb{R}^n:\|x\|_{\infty}\leq\mu\}$ if $\alpha = 1/\mu$. The springback penalty appears to be a resemblance to the $\ell_{1-2}$ penalty, but their difference is many-sided. For instance, the gradient of $\|x\|_2$ is not defined at the origin.

Figure \ref{fig:penalty} displays some scalar (one-dimensional) penalties, including the $\ell_1$ penalty, the $\ell_{0.5}$ penalty, the transformed $\ell_1$ penalty with $\beta=1$, the MCP with $\mu = 0.75$, and the springback penalty with $\alpha = 1/\mu$ and $\alpha=0.15$. The $\ell_{1-2}$ penalty is not plotted, as it is none other than zero in the one-dimensional case. To give a better visual comparison, we scale them to attain the point $(1,1)$. It is shown in Figure \ref{fig:penalty} that the springback penalty is close to the $\ell_1$ penalty when $\alpha= 0.15$. The springback penalty with $\alpha = 1/\mu$ coincides with the MCP for $|x|\leq \mu$ if we do not scale them. The behavior of the springback penalty for $|x|>\mu$ attracts our interest because it turns around and heads towards the $x$-axis. According to Figure \ref{fig:penalty}, this behavior is clearer in terms of the thresholding operator corresponding to the proximal mapping of the springback penalty, whose mathematical descriptions are given in Section \ref{sec:proximal}.

\begin{figure}[htbp]
  \centering
  \includegraphics[width=\textwidth]{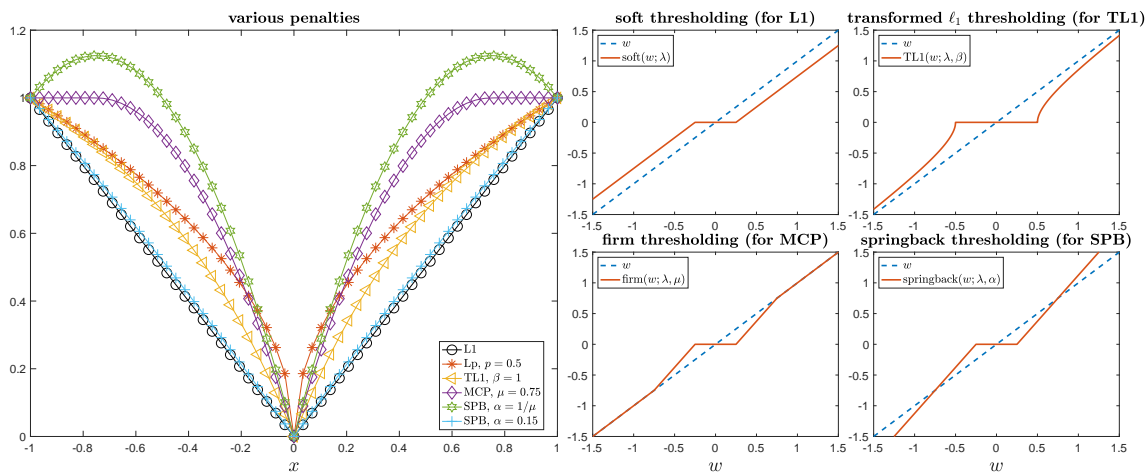}
  \caption{Scalar penalties and corresponding thresholding operators (for representing proximal mappings with $\lambda=0.25$): the $\ell_1$ penalty and the soft thresholding operator; the $\ell_p$ penalty, whose proximal mapping has no closed-form expressions (hence no thresholding operator plotted); the transformed $\ell_1$ penalty with $\beta=1$, whose proximal mapping can be expressed explicitly by a thresholding operator given in \cite{MR3620567}; the MCP with $\mu=0.75$ and the firm thresholding operator; and two springback penalties with $\alpha=1/\mu$ and $\alpha=0.15$, and the springback thresholding operator.}\label{fig:penalty}
\end{figure}

As mentioned, the proposed springback penalty \eqref{equ:spb} balances the approximation quality of the $\ell_0$ penalty and the tractability in analysis and computation, and it is in between the convex and non-convex penalties. More specifically, it is in between the $\ell_1$ penalty and the MCP. For any $x\in\mathbb{R}^n$, we can always find a parameter $\mu$ for the MCP such that $\|x\|_{\infty}\leq\mu$ with a resulting penalty in the form of $\|x\|_1-\|x\|_2^2/(2\mu)$. This penalty inherits the approximation quality of the $\ell_0$ penalty from the MCP and the analytical and computational advantages of the $\ell_1$ penalty. Inasmuch as this penalty, we consider the more general penalty \eqref{equ:spb} in which $1/\mu$ is replaced by a more flexible parameter $\alpha>0$.

\subsection{Proximal mappings and thresholding operators}\label{sec:proximal}
For a function $\mathcal{R}:\mathbb{R}^n\rightarrow\mathbb{R}$, as defined in \cite{MR201952}, the proximal mapping of $\mathcal{R}$ is defined as
\begin{equation}\label{equ:prox}
\text{prox}_{\lambda}\left[\mathcal{R}\right](x):=\arg\min_{y\in\mathbb{R}^n}~\left\{\lambda\mathcal{R}(y)+\frac{1}{2}\|y-x\|^2_2\right\},
\end{equation}
where $\lambda>0$ is a regularization parameter. In \eqref{equ:prox}, we slightly abuse the notation ``=". This mapping takes a vector $x\in\mathbb{R}^n$ and maps it into a subset of $\mathbb{R}^n$, which might be empty, a singleton, or a set with multiple vectors; and the image of $y$ under this mapping is a singleton if the function $\mathcal{R}$ is proper closed and convex \cite{MR3719240}. For a given optimization model, if the proximal mapping of its objective function has a closed-form expression, then usually it is important and necessary to consider how to take advantage of this feature for algorithmic design.

When the proximal mapping of a penalty can be represented explicitly, the closed-form representation is often called a \emph{thresholding operator} or a shrinkage operator in the literature. For example, as analyzed in \cite{MR3620567}, with the \emph{soft thresholding operator}
\begin{equation*}
{\rm{soft}}(w;\lambda)={{\rm{sgn}}(w)}\max\{|w|-\lambda,0\},
\end{equation*}
which has been widely used in various areas such as compressed sensing and image processing, the proximal mapping \eqref{equ:prox} of the $\ell_1$ penalty can be expressed explicitly by
\begin{equation*}
\left[\text{prox}_{\lambda}\left[\mathcal{R}^{\ell_1}\right](x)\right]_i={\rm{soft}}(x_i;\lambda), \quad i=1,\ldots,n.
\end{equation*}

The proximal mapping of a non-convex penalty, in general, does not have a closed-form expression; such cases include the $\ell_{1-2}$ penalty and the $\ell_p$ penalty with $0<p<1$. However, there are some particular non-convex penalties whose proximal mappings can still be represented explicitly. For instance, the transformed $\ell_1$ penalty \cite{MR3620567} and the MCP \cite{zhang2010nearly}. In particular, with the following \emph{firm thresholding operator}
\begin{equation*}
{\rm{firm}}(w;\lambda,\mu)=\begin{cases}
0,& |w|\leq \lambda,\\
{\rm{sgn}}(w)\frac{\mu(|w|-\lambda)}{\mu-\lambda},& \lambda\leq |w|\leq \mu,\\
w, & |w|\geq\mu,
\end{cases}
\end{equation*}
which was first proposed in \cite{gao1997waveshrink}, it was further studied in \cite{zhang2010nearly} that the proximal mapping \eqref{equ:prox} of the MCP can be expressed explicitly by a firm thresholding operator for the case of orthonormal designs. More specifically, the proximal mapping \eqref{equ:prox} of the MCP is
\begin{equation*}
\left[\text{prox}_{\lambda}\left[\mathcal{R}^{\rm{MCP}}_{\mu}\right](x)\right]_i = \text{firm}(x_i;\lambda,\mu), \quad i=1,\ldots,n.
\end{equation*}

Below, we show that for the springback penalty \eqref{equ:spb} with a well chosen $\alpha$, its proximal mapping can also be expressed explicitly.

\begin{definition}
The \emph{springback thresholding operator} is defined as
\begin{equation}\label{equ:sbshrinkage}
{\rm{springback}}(w;\lambda,\alpha)=\begin{cases}
0,& |w|\leq \lambda,\\
{\rm{sgn}}(w)\frac{|w|-\lambda}{1-\lambda\alpha},&  |w|>\lambda.
\end{cases}
\end{equation}
\end{definition}
\begin{proposition}\label{prop:shrinkage}
If $1-\lambda\alpha>0$, then the proximal mapping of the springback penalty \eqref{equ:spb} can be represented explicitly as
\begin{equation*}
\left[{\rm{prox}}_{\lambda}\left[\mathcal{R}^{\rm{SPB}}_{\alpha}\right](x)\right]_i={\rm{springback}}(x_i;\lambda,\alpha),  \quad i=1,\ldots,n.
\end{equation*}
\end{proposition}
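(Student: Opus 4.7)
The plan is to exploit the fact that both the springback penalty and the quadratic proximal term are separable across coordinates, so the vector-valued proximal problem decouples into $n$ independent scalar problems of the form
\begin{equation*}
\min_{y \in \mathbb{R}} \; F(y) := \lambda |y| - \frac{\lambda\alpha}{2} y^{2} + \frac{1}{2}(y-w)^{2},
\end{equation*}
where $w = x_i$. It then suffices to show that a unique minimizer exists and coincides with $\mathrm{springback}(w;\lambda,\alpha)$; stacking these scalar solutions yields the coordinate-wise formula in the proposition.

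To establish uniqueness, I would rewrite $F(y) = \lambda|y| + \tfrac{1-\lambda\alpha}{2} y^{2} - w y + \tfrac{1}{2}w^{2}$. The assumption $1 - \lambda\alpha > 0$ makes the smooth quadratic piece strictly convex, and adding the convex term $\lambda|y|$ preserves strict convexity and coercivity. Hence $F$ admits a unique global minimizer, which is also a key point for the proximal mapping to be single-valued even though $\mathcal{R}^{\mathrm{SPB}}_{\alpha}$ itself is weakly (not fully) convex.

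Next I would apply Fermat's rule via the subdifferential. Writing $0 \in \partial F(y)$ gives the inclusion $w \in (1-\lambda\alpha)\, y + \lambda\, \partial |y|$. I would split into three cases: (i) if $y>0$, then $\partial|y| = \{1\}$ and the equation forces $y = (w-\lambda)/(1-\lambda\alpha)$, which is positive exactly when $w > \lambda$; (ii) symmetrically, $y<0$ yields $y = (w+\lambda)/(1-\lambda\alpha)$, consistent exactly when $w < -\lambda$; (iii) if $y=0$, then $\partial|y| = [-1,1]$, giving the condition $|w| \leq \lambda$. These three cases cover $\mathbb{R}$ disjointly (up to the boundary values $w = \pm\lambda$, where both the zero branch and the nonzero branch agree) and reproduce \eqref{equ:sbshrinkage} exactly.

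There is no real obstacle — the argument is essentially the same one that gives the soft and firm thresholding operators. The only place requiring care is the role of the hypothesis $1 - \lambda\alpha > 0$: without it, the concave quadratic $-\tfrac{\lambda\alpha}{2} y^{2}$ in $F$ would dominate and $F$ would cease to be convex (in fact it would be unbounded below when $1-\lambda\alpha < 0$), so the proximal mapping would fail to be well-defined in the usual sense. This is why the statement of the proposition explicitly assumes $1-\lambda\alpha > 0$, and this is the one hypothesis the proof must genuinely use.
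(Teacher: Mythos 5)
Your proposal is correct and follows essentially the same route as the paper: both use the hypothesis $1-\lambda\alpha>0$ to make the proximal subproblem convex (the paper phrases this as positive definiteness of the Hessian of the smooth part), write the subdifferential optimality condition, and solve coordinate-wise. The only cosmetic difference is that the paper rearranges the optimality condition into the form $0\in z-\frac{1}{1-\lambda\alpha}y+\frac{\lambda}{1-\lambda\alpha}\partial(\|z\|_1)$ and reads off the answer as a rescaled soft thresholding, whereas you re-derive the same formula by an explicit three-case analysis on the sign of $y$.
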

\begin{proof}
When $\mathcal{R}(x)=\mathcal{R}^{\ell_1}(x)$, it follows from \eqref{equ:prox} that, for any $z\in\mathbb{R}^n$ satisfying $0\in z-y+\lambda\partial\left(\|z\|_1\right)$, there holds $z_i={\rm{soft}}(y_i;\lambda)$, i.e., $z=\text{prox}_{\lambda}\left[\mathcal{R}^{\ell_1}\right](y)$. The assumption $1-\lambda\alpha>0$ ensures $\nabla^2\left(\frac12\|x-y\|_2^2-\frac{\lambda\alpha}{2}\|x\|_2^2\right)=(1-\lambda\alpha)I$ to be positive definite. Thus, the optimization problem occurred in \eqref{equ:prox} is convex. When $\mathcal{R}(x)=\mathcal{R}^{\rm{SPB}}_{\alpha}(x)$ in \eqref{equ:prox}, for any $z\in\mathbb{R}^n$ satisfying the condition $0\in z-y+\lambda \partial (\|z\|_1)-\lambda\alpha z$, which is equivalent to
\begin{equation}\label{equ:first}
 0 \in z-\frac{1}{1-\lambda\alpha}y+\frac{\lambda}{1-\lambda\alpha}\partial(\|z\|_1),
\end{equation}
we have $z = \text{prox}_{\lambda}\left[\mathcal{R}^{\rm{SPB}}_{\alpha}\right](y)$. It also follows from \eqref{equ:first} that
\begin{equation*}
z_i=\text{soft}\left(\frac{y_i}{1-\lambda\alpha};\frac{\lambda}{1-\lambda\alpha}\right)=\text{springback}(y_i;\lambda,\alpha).
\end{equation*}
Hence, the assertion is proved.
\end{proof}

Recall that the springback penalty \eqref{equ:spb} is a weakly convex function. Its thresholding operator defined in \eqref{equ:sbshrinkage} is also in between the soft and firm thresholding operators. As $\lim_{\mu\rightarrow\infty}{\rm{firm}}(w;\lambda,\mu)={\rm{soft}}(w;\lambda)$, a compromising $\mu$ could be large enough such that $|w|\leq\mu$ and it reaches a certain compromise between the soft and firm thresholding operators. In this case, we have a particular springback thresholding operator
\begin{equation*}
{\rm{springback}}(w;\lambda,1/\mu)=\begin{cases}
0,& |w|\leq \lambda,\\
{\rm{sgn}}(w)\frac{\mu(|w|-\lambda)}{\mu-\lambda},& |w|\geq\lambda.
\end{cases}
\end{equation*}
If $1/\mu$ is replaced by a more general $\alpha>0$, then the springback thresholding operator \eqref{equ:sbshrinkage} is recovered.

\subsection{Rationale of the name}
Springback is a concept in applied mechanics (see, e.g., \cite{todd1994manufacturing}).  Figure \ref{fig:penalty} gives more explanations for naming \eqref{equ:spb} \emph{springback}. With $\lambda=0.25$, Figure \ref{fig:penalty} displays the thresholding operators for $w\in[-1.5,1.5]$, including the soft thresholding operator, the transformed $\ell_1$ thresholding operator with $\beta = 1$, the firm thresholding operator with $\mu=0.75$, and the springback thresholding operator with $\alpha=1/\mu$. The transformed $\ell_1$ thresholding operator enforces $w$ with $|w|\leq \lambda(\beta+1)/\beta$ to be 0, and then its outputs approach to $w$ as $|w|$ increases. All the other thresholding operators enforce $w$ with $|w|\leq\lambda$ to be $0$. For $w\geq \lambda$, the soft thresholding operator subtracts $\lambda$ from $|w|$ and thus causes the $\ell_1$ penalty to underestimate high-amplitude components; the firm thresholding operator's outputs jump from 0 to $\mu$ until $|w|$ exceeds $\mu$, afterwards its output is $w$. For the springback thresholding operator, its outputs jump from 0 to $\mu$ until $|w|$ exceeds $\mu$, and afterwards its outputs still keep going along the previous jumping trajectory.

In applied mechanics, spring is related to the process of bending some materials. When the bending process is done, the residual stresses cause the material to \emph{spring back} towards its original shape, so the material must be \emph{over-bent} to achieve the proper bending angle. Note that the soft thresholding operator always underestimates high-amplitude components, and the components $\|x\|_1$ and $-\frac{\alpha}{2}\|x\|_2^2$ in the springback penalty are decoupled. If we deem the soft thresholding operator as a process of over-bending, which stems for the component $\|x\|_1$, then the output of the soft thresholding operator will be sprung back toward $w$, which is achieved separately in consideration with the component $-\frac{\alpha}{2}\|x\|_2^2$. Such a springback process occurs for both $\lambda\leq|w|\leq\mu$ and $|w|\geq\mu$. The springback behavior is more obvious for those $w$ with larger absolute values, and this coincides with the behavior of the springback penalty in Figure \ref{fig:penalty}. That is, once $|x|$ exceeds $\mu$, the penalty turns around and heads towards the $x$-axis. This process may also be explained as a compensation of the loss of $|w|$ with $|w|\leq\lambda$.

\section{Springback-penalized model for sparse signal recovery}\label{sec:signal}

In this section, we focus on the recovery of a sparse signal using the springback-penalized model \eqref{equ:CSproblem}. After reviewing some basic knowledge of compressed sensing, we identify some conditions for exact and robust recovery using the springback-penalized model \eqref{equ:CSproblem}, respectively.

\subsection{Compressed sensing basics}
In some seminal compressed sensing papers such as \cite{candes2006robust,donoho2006compressed}, recovery conditions have been established for the BP model \eqref{equ:l1}. These conditions rely on the restricted isometry property (RIP) of the \emph{sensing matrix} $A$, as proposed in \cite{candes2005decoding}.
\begin{definition}
 For an index set $T\subset\{1,2,\ldots,n\}$ and an integer $s$ with $|T|\leq s$, the $s$-\emph{restricted isometry constant} (RIC) of ${A}\in\mathbb{R}^{m\times n}$ is the smallest $\delta_s\in(0,1)$ such that
\begin{equation*}
(1-\delta_s)\|x\|_2^2\leq\|A_Tx\|_2^2\leq(1+\delta_s)\|x\|_2^2
\end{equation*}
for all subsets $T$ with $|T|\leq s$ and all $x\in\mathbb{R}^{|T|}$. The matrix ${A}$ is said to satisfy the $s$-\emph{restricted isometry property} (RIP) with $\delta_s$.
\end{definition}

Denoting by $x^{\text{opt}}$ the minimizer of the BP problem \eqref{equ:l1}, if $A$ satisfies $\delta_{3s}<3(1-\delta_{4s})-1$, then for an $s$-sparse $\bar{x}$, one has
\begin{equation}\label{equ:l1sparserecovery}
\|x^{\text{opt}}-\bar{x}\|_2\leq C_s\tau,
\end{equation}
where $C_s$ is a constant which may only depend on $\delta_{4s}$. We refer to \cite{candes2006stable,candes2005error} for more details. If the measurements are noise-free, i.e., $\tau=0$, then the error bound \eqref{equ:l1sparserecovery} implies \emph{exact recovery}. Exact recovery is guaranteed only in the idealized situation where $\bar{x}$ is $s$-sparse and the measurements are noise-free. If the measurements are perturbed by some noise, then the bound \eqref{equ:l1sparserecovery} is usually referred to as the \emph{robust recovery} result with respect to the measurement noise. In more realistic scenarios, we can only claim that $\bar{x}$ is close to an $s$-sparse vector, and the measurements may also be contaminated. In such cases, we can recover $\bar{x}$ with an error controlled by its distance to $s$-sparse vectors, and it was proved in \cite{candes2006stable} that
\begin{equation}\label{equ:l1nonsparserecovery}
\|x^{\text{opt}}-\bar{x}\|_2\leq C_{1,s}\tau+C_{2,s}\frac{\|\bar{x}-\bar{x}_{s}\|_1}{\sqrt{s}},
\end{equation}
where $\bar{x}_{s}$ is the truncated vector corresponding to the $s$ largest values of $\bar{x}$ (in absolute value), and $C_{1,s}$ and $C_{2,s}$ are two constants which may only depend on $\delta_{4s}$. The bound \eqref{equ:l1nonsparserecovery} is usually referred to as the \emph{stable recovery} results. Recovery conditions for other models with different penalties are usually not as extensive as the BP model \eqref{equ:l1}. Under the framework of the RIP or some generalized versions, recovery theory for the BP model \eqref{equ:l1} has been generalized to the $\ell_p$-penalized model in \cite{chartrand2007exact,foucart2009sparsest}. With the \emph{unique representation property} of $A$, stable recovery results for the MCP-penalized model were derived in \cite{woodworth2016compressed} and an upper bound for $\sum_{i=1}^n\phi^{\text{MCP}}_{\mu}(x^{\text{opt}}_i-\bar{x}_i)$, but not for $\|x^{\text{opt}}-\bar{x}\|_2$, was obtained. We recommend the monograph \cite{MR3100033} for a more comprehensive and detailed exhibition on compressed sensing.

\subsection{Recovery guarantee using the springback-penalized model}
Still denoting by $x^{\text{opt}}$ the minimizer of the springback-penalized model \eqref{equ:CSproblem}, we have the following exact and robust recovery results of the model \eqref{equ:CSproblem} for an $s$-sparse $\bar{x}$.

\begin{theorem}[recovery of sparse signals]\label{thm:sparse}
Let $\bar{x}\in\mathbb{R}^n$ be an unknown $s$-sparse vector to be recovered. For a given sensing matrix $A\in\mathbb{R}^{m\times n}$, let $b\in\mathbb{R}^m$ be a vector of measurements from $b=A\bar{x}+e$ with $\|e\|_2\leq\tau$, and let $\delta_{3s}$ and $\delta_{4s}$ be the $3s$- and $4s$-RIC's of $A$, respectively. Suppose $A$ satisfies $\delta_{3s}<3(1-\delta_{4s})-1$ and $\alpha$ satisfies
\begin{equation}\label{equ:alpha}
\alpha\leq\frac{\sqrt{1-\delta_{4s}}\sqrt{3s}-\sqrt{1+\delta_{3s}}\sqrt{s}}{(\sqrt{1-\delta_{4s}}+\sqrt{1+\delta_{3s}})\|x^{\rm{opt}}\|_{2}},
\end{equation}
then the minimizer $x^{\rm{opt}}$ of the problem \eqref{equ:CSproblem} satisfies $x^{\rm{opt}}=\bar{x}$ when $\tau=0$; and it satisfies
\begin{equation}\label{equ:stablerecovery}
\|x^{\rm{opt}}-\bar{x}\|_2\leq \frac{\sqrt{2}}{\sqrt{D_1}}\sqrt{\tau}
\end{equation}
when $\tau\geq0$,
where
\begin{equation}\label{equ:D1}
D_1=\frac{\alpha}{2}\frac{\sqrt{1-\delta_{4s}}+\sqrt{1+\delta_{3s}}}{\sqrt{3s}+\sqrt{s}}.
\end{equation}
\end{theorem}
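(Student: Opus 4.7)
The plan is to adapt the classical Candes--Romberg--Tao RIP-based analysis of the basis pursuit model to the springback setting, with a single substantive modification to handle the weakly convex correction $-(\alpha/2)\|x\|_2^2$ that distinguishes $\mathcal{R}^{\rm SPB}_\alpha$ from the $\ell_1$ penalty. Set $h := x^{\rm opt} - \bar{x}$ and $T_0 := \operatorname{supp}(\bar{x})$, so that $|T_0| \le s$; feasibility of both $x^{\rm opt}$ and $\bar{x}$ in the constraint set $\{x:Ax-b\in\mathcal{B}(\tau)\}$ gives $\|Ah\|_2 \le 2\tau$ at once.

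The first step is to extract a cone-type condition on $h$ from the optimality inequality $\|x^{\rm opt}\|_1 - \frac{\alpha}{2}\|x^{\rm opt}\|_2^2 \le \|\bar{x}\|_1 - \frac{\alpha}{2}\|\bar{x}\|_2^2$. Combined with the standard $\ell_1$ splitting $\|\bar{x}+h\|_1 \ge \|\bar{x}\|_1 - \|h_{T_0}\|_1 + \|h_{T_0^c}\|_1$, this yields $\|h_{T_0^c}\|_1 \le \|h_{T_0}\|_1 + \frac{\alpha}{2}(\|x^{\rm opt}\|_2^2 - \|\bar{x}\|_2^2)$. The new ingredient, relative to the $\ell_1$ analysis, is to control the quadratic correction on the right: factoring the difference of squares and using the reverse triangle inequality give $\frac{\alpha}{2}(\|x^{\rm opt}\|_2^2 - \|\bar{x}\|_2^2) \le \alpha\|x^{\rm opt}\|_2\|h\|_2$ (trivially when the left side is negative), which produces a perturbed $\ell_1$ cone condition with an extra $\alpha\|x^{\rm opt}\|_2\|h\|_2$ slack.

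I would then run the standard dyadic decomposition: sort the coordinates of $h_{T_0^c}$ in decreasing magnitude, partition into blocks $T_1,T_2,\ldots$ of size $3s$, and set $T_{01} := T_0 \cup T_1$. The classical estimates $\|h_{T_0}\|_1 \le \sqrt{s}\,\|h_{T_{01}}\|_2$ and $\sum_{j\ge2}\|h_{T_j}\|_2 \le \|h_{T_0^c}\|_1/\sqrt{3s}$, fed into the perturbed cone bound, give $\sum_{j\ge2}\|h_{T_j}\|_2$ in terms of $\|h_{T_{01}}\|_2$ and $\|h\|_2$. Applying the RIP on $T_{01}$ (of size $\le 4s$) and on each $T_j$ (of size $\le 3s$) together with the triangle inequality $\|Ah_{T_{01}}\|_2 \le \|Ah\|_2 + \sum_{j\ge 2}\|Ah_{T_j}\|_2$ then produces
\[
\bigl(\sqrt{(1-\delta_{4s})\,3s}\;-\;\sqrt{(1+\delta_{3s})\,s}\bigr)\|h_{T_{01}}\|_2 \;\le\; 2\sqrt{3s}\,\tau \;+\; \sqrt{1+\delta_{3s}}\,\alpha\|x^{\rm opt}\|_2\|h\|_2.
\]
The assumption $\delta_{3s} < 3(1-\delta_{4s})-1$ is precisely what makes the coefficient on the left strictly positive, serving as the springback analogue of Candes' classical RIP condition for $\ell_1$ recovery.

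Finally, I would close the loop by writing $\|h\|_2 \le \|h_{T_{01}}\|_2 + \sum_{j\ge 2}\|h_{T_j}\|_2$ and substituting the earlier tail estimate, giving $(\sqrt{3s} - \alpha\|x^{\rm opt}\|_2)\|h\|_2 \le (\sqrt{3s} + \sqrt{s})\|h_{T_{01}}\|_2$; the hypothesis \eqref{equ:alpha} is tailored so that this coefficient is strictly positive. Substituting into the displayed RIP inequality and absorbing the $\alpha\|x^{\rm opt}\|_2\|h\|_2$ slack onto the left, the residual coefficient on $\|h\|_2$ reduces, after elementary algebra, to a constant multiple of $D_1$ defined in \eqref{equ:D1}, yielding the stated bound $\|h\|_2 \le \sqrt{2/D_1}\,\sqrt{\tau}$. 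The exact recovery conclusion ($h = 0$ when $\tau = 0$) is then immediate. The main obstacle is this last algebraic bookkeeping: verifying that the perturbed cone term, the RIP inequality, and the closing bound combine to produce precisely $D_1$, rather than a messier rational function of $\alpha$ and the restricted isometry constants. The particular form of \eqref{equ:alpha} is engineered exactly so that this cancellation takes place cleanly.
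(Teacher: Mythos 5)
Your skeleton---the optimality inequality, a perturbed cone condition, the dyadic decomposition of $\Lambda_0^c$ into blocks of size $3s$, the RIP applied on $\Lambda_{01}$, and the closing step relating $\|h\|_2$ to $\|h_{T_{01}}\|_2$---is exactly the paper's. But there is a genuine gap at your very first ``new ingredient,'' and it propagates to the end: you bound
\begin{equation*}
\frac{\alpha}{2}\left(\|x^{\rm{opt}}\|_2^2-\|\bar{x}\|_2^2\right)\leq \alpha\|x^{\rm{opt}}\|_2\|h\|_2,
\end{equation*}
thereby discarding the quadratic term in the exact identity $\frac{\alpha}{2}(\|x^{\rm{opt}}\|_2^2-\|\bar{x}\|_2^2)=\alpha\langle x^{\rm{opt}},h\rangle-\frac{\alpha}{2}\|h\|_2^2$. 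The paper keeps this identity intact, carries both $-\frac{\alpha}{2}\|h\|_2^2$ and $\alpha\langle x^{\rm{opt}},h\rangle$ through the entire block decomposition, and only at the last step applies Cauchy--Schwarz to the inner product. The payoff is the \emph{quadratic} lower bound $\|Ah\|_2\geq D_1\|h\|_2^2$ in \eqref{equ:Av}: under \eqref{equ:alpha} the linear term $D_2\|h\|_2$ and the inner-product term cancel against each other, leaving only the quadratic term, and $\|Ah\|_2\leq 2\tau$ then yields the $\sqrt{\tau}$-type estimate \eqref{equ:stablerecovery}. Once you have thrown the quadratic term away, every subsequent inequality is linear in $\|h\|_2$, so your endgame can only produce a bound of the form $c\,\|h\|_2\leq 2\tau$; no algebraic bookkeeping recovers an $O(\sqrt{\tau})$ bound from that.

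Concretely, your final coefficient is not ``a constant multiple of $D_1$.'' Combining your two displayed inequalities, the coefficient of $\|h\|_2$ works out to $D_2-2D_1\|x^{\rm{opt}}\|_2$, with $D_2$ as in \eqref{equ:D2}; your hypothesis \eqref{equ:alpha} is precisely the statement $D_2\geq 2D_1\|x^{\rm{opt}}\|_2$, so this coefficient is nonnegative but \emph{vanishes} when \eqref{equ:alpha} holds with equality---in which case your argument yields nothing, not even exact recovery at $\tau=0$, whereas the theorem still asserts $\|h\|_2\leq\sqrt{2\tau/D_1}$. What you do obtain, under strict inequality in \eqref{equ:alpha}, is a legitimate linear bound $\|h\|_2\leq 2\tau/(D_2-2D_1\|x^{\rm{opt}}\|_2)$, which is correct and for small $\tau$ even sharper; but it is not the claimed result (and the paper explicitly remarks that its bound is deliberately quadratic and does not reduce to the BP bound as $\alpha\rightarrow 0$). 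To prove \eqref{equ:stablerecovery} you must retain $-\frac{\alpha}{2}\|h\|_2^2$ in the cone condition and propagate it, splitting at the end into the cases $\langle x^{\rm{opt}},h\rangle\leq 0$ and $\langle x^{\rm{opt}},h\rangle>0$ as the paper does.
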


\begin{proof}
Let $x^{\rm{opt}}=\bar{x}+v$, and $\Lambda_0$ be the support of $\bar{x}$. It is clear that $v_{\Lambda_0}=x^{\rm{opt}}_{\Lambda_0}-\bar{x}$ and $v_{\Lambda_0^c}=x^{\rm{opt}}_{\Lambda_0^c}$. On the one hand, we know that
\begin{equation*}
\|x^{\rm{opt}}\|_1-\frac{\alpha}{2}\|x^{\rm{opt}}\|_2^2\leq\|\bar{x}\|_1-\frac{\alpha}{2}\|\bar{x}\|_2^2.
\end{equation*}
On the other hand, it holds that
\begin{equation*}\begin{split}
\|x^{\rm{opt}}\|_1-\frac{\alpha}{2}\|x^{\rm{opt}}\|_2^2
=&\|\bar{x}+v_{\Lambda_0}\|_1+\|v_{\Lambda_0^c}\|_1-\frac{\alpha}{2}\|\bar{x}+v\|_2^2\\
\geq& \|\bar{x}\|_1-\|v_{\Lambda_0}\|_1+\|v_{\Lambda_0^c}\|_1-\frac{\alpha}{2}\left(\|\bar{x}\|_2^2+2\left<\bar{x},v\right>+\|v\|_2^2\right).
\end{split}\end{equation*}
Then, we have that
\begin{equation*}\|v_{\Lambda_0^c}\|_1
\leq\|v_{\Lambda_0}\|_1-\frac{\alpha}{2}\|v\|_2^2+\alpha\|v\|_2^2+\alpha\left<\bar{x},v\right>= \|v_{\Lambda_0}\|_1-\frac{\alpha}{2}\|v\|_2^2+\alpha\left<x^{\rm{opt}},v\right>.\end{equation*}

We continue by arranging the indices in $\Lambda_0^c$ in order of decreasing magnitudes (in absolute value) of $v_{\Lambda_0^c}$, and then dividing $\Lambda_0^c$ into subsets of size $3s$. Set $\Lambda_0^c=\Lambda_1\bigcup\Lambda_2\bigcup\cdots\bigcup\Lambda_{\ell}$, i.e., $\Lambda_1$ contains the indices of the $3s$ largest entries (in absolute value) of $v_{\Lambda_0^c}$, $\Lambda_2$ contains the indices of the next $3s$ largest entries (in absolute value) of $v_{\Lambda_0^c}$, and so on. The cardinal number of $\Lambda_{\ell}$ may be less than $3s$. Denoting $\Lambda_{01}=\Lambda_0\bigcup\Lambda_1$ and using the RIP of $A$, we have
\begin{equation*}
\|Av\|_2\geq\|A_{\Lambda_{01}}v_{\Lambda_{01}}\|_2-\left\|\sum_{i=2}^{\ell}A_{\Lambda_i}v_{\Lambda_i}\right\|_2\geq\sqrt{1-\delta_{4s}}\|v_{\Lambda_{01}}\|_2-\sqrt{1+\delta_{3s}}\sum_{i=2}^{\ell}\|v_{\Lambda_i}\|_2.
\end{equation*}
As the magnitude of every $v_t$ indexed by $t\in\Lambda_{i+1}$ is less than the average of magnitudes of $v_t$ indexed by $t\in\Lambda_i$, there holds
$|v_{t}|\leq\frac{\|v_{\Lambda_i}\|_1}{3s}$, where $t\in\Lambda_{i+1}$. Then, we have
\begin{equation*}
\|v_{\Lambda_{i+1}}\|_2^2\leq 3s\frac{\|v_{\Lambda_i}\|_1^2}{(3s)^2}=\frac{\|v_{\Lambda_i}\|_1^2}{3s}.
\end{equation*}
Together with $\|v_{\Lambda_0}\|_1\leq\sqrt{s}\|v_{\Lambda_0}\|_2\leq\sqrt{s}\|v_{\Lambda_{01}}\|_2$, we have
\begin{equation*}\begin{split}
\sum_{i=2}^{\ell}\|v_{\Lambda_i}\|_2\leq\sum_{i=1}^{\ell-1}\frac{\|v_{\Lambda_i}\|_1}{\sqrt{3s}}\leq\frac{1}{\sqrt{3s}}\|v_{\Lambda_0^c}\|_1\leq\frac{1}{\sqrt{3s}}\left(\sqrt{s}\|v_{\Lambda_{01}}\|_2-\frac{\alpha}{2}\|v\|_2^2+\alpha\left<x^{\rm{opt}},v\right>\right).
\end{split}\end{equation*}
Thus, it holds that
\begin{equation}\label{equ:Avcase2}
\|Av\|_2\geq\left(\sqrt{1-\delta_{4s}}-\frac{\sqrt{1+\delta_{3s}}\sqrt{s}}{\sqrt{3s}}\right)\|v_{\Lambda_{01}}\|_2+\frac{\alpha\sqrt{1+\delta_{3s}}}{2\sqrt{3s}} \|v\|_2^2-\frac{\alpha\sqrt{1+\delta_{3s}}}{\sqrt{3s}}\left<x^{\rm{opt}},v\right>.
\end{equation}
Note that
\begin{equation*}
\|v\|_2\leq\|v_{\Lambda_{01}}\|_2+\sum_{i=2}^{\ell}\|v_{\Lambda_i}\|_2
\leq\left(1+\frac{\sqrt{s}}{\sqrt{3s}}\right)\|v_{\Lambda_{01}}\|_2-\frac{\alpha}{2\sqrt{3s}} \|v\|_2^2+\frac{\alpha}{\sqrt{3s}}\left<x^{\rm{opt}},v\right>,
\end{equation*}
and it can be written as
\begin{equation*}
\|v_{\Lambda_{01}}\|_2\geq\frac{\sqrt{3s}}{\sqrt{3s}+\sqrt{s}}\left(\frac{\alpha}{2\sqrt{3s}}\|v\|_2^2+\|v\|_2-\frac{\alpha}{\sqrt{3s}}\left<x^{\rm{opt}},v\right>\right).
\end{equation*}
With the assumption $\delta_{3s}<3(1-\delta_{4s})-1$ on $A$, the coefficient of $\|v_{\Lambda_{01}}\|_2$ in \eqref{equ:Avcase2} is positive and thus we have
    \begin{equation}\label{equ:Av}\begin{split}
    \|Av\|_2\geq&\frac{\sqrt{1-\delta_{4s}}\sqrt{3s}-\sqrt{1+\delta_{3s}}\sqrt{s}}{\sqrt{3s}+\sqrt{s}}\left(\frac{\alpha}{2\sqrt{3s}}\|v\|_2^2+\|v\|_2-\frac{\alpha}{\sqrt{3s}}\left<x^{\rm{opt}},v\right>\right)\\
    &+\frac{\alpha\sqrt{1+\delta_{3s}}}{2\sqrt{3s}} \|v\|_2^2-\frac{\alpha\sqrt{1+\delta_{3s}}}{\sqrt{3s}}\left<x^{\rm{opt}},v\right>\\
    =&\frac{\alpha}{2}\left(\frac{\sqrt{1-\delta_{4s}}+\sqrt{1+\delta_{3s}}}{\sqrt{3s}+\sqrt{s}}  \right)\|v\|_2^2+\frac{\sqrt{1-\delta_{4s}}\sqrt{3s}-\sqrt{1+\delta_{3s}}\sqrt{s}}{\sqrt{3s}+\sqrt{s}}\|v\|_2\\
    &-\alpha\left(\frac{\sqrt{1-\delta_{4s}}+\sqrt{1+\delta_{3s}}}{\sqrt{3s}+\sqrt{s}}  \right)\left<x^{\rm{opt}},v\right>.
    \end{split}\end{equation}
    If $\left<x^{\rm{opt}},v\right>\leq0$, then $\|Av\|_2\geq D_1\|v\|_2^2$. If $\left<x^{\rm{opt}},v\right>>0$, then the condition \eqref{equ:alpha} on $\alpha$ guarantees
\begin{equation*}\begin{split}
&\frac{\sqrt{1-\delta_{4s}}\sqrt{3s}-\sqrt{1+\delta_{3s}}\sqrt{s}}{\sqrt{3s}+\sqrt{s}}\|v\|_2-\alpha\left(\frac{\sqrt{1-\delta_{4s}}+\sqrt{1+\delta_{3s}}}{\sqrt{3s}+\sqrt{s}}  \right)\left<x^{\rm{opt}},v\right>\\
\geq &\frac{\sqrt{1-\delta_{4s}}\sqrt{3s}-\sqrt{1+\delta_{3s}}\sqrt{s}}{\sqrt{3s}+\sqrt{s}}\left(\|v\|_2-\left<\frac{x^{\rm{opt}}}{\|x^{\rm{opt}}\|_{2}},v\right>\right)\geq 0,
\end{split}\end{equation*}
where we use the Cauchy--Schwarz inequality. Hence we also have $\|Av\|_2\geq D_1\|v\|_2^2$.

When $\tau=0$, the inequality $Av=A(x^{\rm{opt}}-\bar{x})=b-b=0$ renders $0=\|Av\|_2\geq D_1\|v\|_2^2$, which implies $\|v\|_2=0$. Thus $x^{\rm{opt}}=\bar{x}$. When $\tau>0$, the inequality
\begin{equation*}
\|Av\|_2=\|Ax^{\rm{opt}}-A\bar{x}\|_2\leq\|Ax^{\rm{opt}}-b\|_2+\|A\bar{x}-b\|_2\leq2\tau
\end{equation*}
leads to $2\tau\geq D_1\|v\|_2^2$, which implies \eqref{equ:stablerecovery}.
\end{proof}

In analysis of signal recovery models with various convex and non-convex penalties, such as the $\ell_1$ penalty \cite{candes2005error,chartrand2007exact} and the $\ell_{1-2}$ penalty \cite{yan2017sparse,yin2015minimization}, a linear lower bound for $\|A(x^{\rm{opt}}-\bar{x})\|_2$ is derived somehow. The proof of Theorem \ref{thm:sparse} mainly follows the idea of \cite{candes2005error}, but we derive a quadratic lower bound for the term $\|A(x^{\rm{opt}}-\bar{x})\|_2$.  Thus, it is worthy noting that our results cannot be reduced to the result of the BP model \eqref{equ:l1} as $\alpha\rightarrow0$. Indeed, the quadratic bound \eqref{equ:Avcase2} in our proof is reduced to a linear bound as $\alpha\rightarrow0$, which then leads to the same results as the BP model \eqref{equ:l1}. However, we handle our final quadratic bound by removing its linear and constant terms and hence the obtained result cannot be reduced to the result of the BP model \eqref{equ:l1} as $\alpha\rightarrow0$.

Besides, the condition \eqref{equ:alpha} on $\alpha$ is required for the springback-penalized model \eqref{equ:CSproblem}. It is impossible to choose an $\alpha$ satisfying \eqref{equ:alpha} unless we have \emph{a priori} estimation on $\|x^{\rm{opt}}\|_2$ before solving the problem \eqref{equ:CSproblem}. Thus, the condition \eqref{equ:alpha} then can be interpreted as a \emph{posterior verification} in the sense that it can be verified once $x^{\rm{opt}}$ is obtained by solving the problem \eqref{equ:CSproblem}.

\begin{remark}[Posterior verification]
In practice, we solve the springback-penalized model \eqref{equ:CSproblem} numerically and thus obtain an approximate solution, denoted by $x^*$, subject to a preset accuracy $\epsilon>0$. That is,  $\|x^{\rm{opt}}-x^*\|_2\leq\epsilon$. Then, the posterior verification \eqref{equ:alpha} is guaranteed if
\begin{equation*}
\alpha\leq\frac{\sqrt{1-\delta_{4s}}\sqrt{3s}-\sqrt{1+\delta_{3s}}\sqrt{s}}{(\sqrt{1-\delta_{4s}}+\sqrt{1+\delta_{3s}})(\|x^*\|_2+\epsilon)}.
\end{equation*}
\end{remark}

\subsection{On the exact and robust recovery}
In Theorem \ref{thm:sparse}, we establish conditions for exact and robust recovery using the springback-penalized model \eqref{equ:CSproblem}. Table \ref{tab:exact} lists the exact recovery conditions for five other popular models in the literature. In particular, the springback-penalized model \eqref{equ:CSproblem} and the $\ell_1$-penalized model, i.e., the BP model \eqref{equ:l1}, have the same RIP condition. This condition is more stringent than that of the $\ell_p$-penalized model ($0<p<1$) but weaker than those of the transformed $\ell_1$- and $\ell_{1-2}$-penalized models. Beside the RIP condition, there is an additional assumption $a(s)>1$ for the $\ell_{1-2}$-penalized model, where $a(s)$ was first derived in \cite{yin2015minimization} and slightly improved in \cite{yan2017sparse} as
\begin{equation*}
a(s)=\left(\frac{3s-1}{\sqrt{3}s+\sqrt{4s-1}}\right)^2.
\end{equation*}
Note that $a(s)<3$ was shown in \cite{yan2017sparse,yin2015minimization} for both the cases.

\begin{table}[htbp]
  \centering
  \setlength{\abovecaptionskip}{0pt}
\setlength{\belowcaptionskip}{10pt}
  \caption{Exact recovery conditions recovery models with various penalties.}\label{tab:exact}
  \begin{tabular}{c|c}
  \hline
    Penalty  & RIP condition   \\ \hline
    $\ell_1$  \cite{candes2005error}  & $\delta_{3s}<3(1-\delta_{4s})-1$ \\ \hline
    $\ell_p$  ($0<p<1$) \cite{chartrand2007exact} & $\delta_{3s}<3^{(2-p)/p}(1-\delta_{4s})-1$ \\ \hline
    transformed $\ell_1$ \cite{zhang2018minimization} & $\delta_{3s}<\left(\frac{\beta}{\beta+1}\right)^23(1-\delta_{4s})-1$ \\ \hline
    $\ell_{1-2}$ \cite{yan2017sparse,yin2015minimization} & $\delta_{3s}<a(s)(1-\delta_{4s})-1$  \\ \hline
    springback  & $\delta_{3s}<3(1-\delta_{4s})-1$\\
    \hline
  \end{tabular}
\end{table}

We then discuss robust recovery results. If $\alpha\rightarrow0$, then the result \eqref{equ:stablerecovery} cannot provide any information as $\frac{\sqrt{2}}{\sqrt{D_1}}\rightarrow\infty$. However, for an appropriate $\alpha$, the bound \eqref{equ:stablerecovery} is informative and attractive. The robust recovery results of the $\ell_1$-, $\ell_p$-, transformed $\ell_1$- and $\ell_{1-2}$-penalized models were shown to be linear with respect to the level of noise $\tau$ \cite{candes2005error,chartrand2007exact,yan2017sparse,yin2015minimization,zhang2018minimization}, in the sense of
\begin{equation}\label{equ:linear}
\|x^{\rm{opt}}-\bar{x}\|_2\leq C_s \tau,
\end{equation}
where $C_s$ is some constant. Thus, under the conditions of Theorem \ref{thm:sparse}, the bound \eqref{equ:stablerecovery} for the springback-penalized model \eqref{equ:CSproblem} is tighter than \eqref{equ:linear} in the sense of
\begin{equation}\label{equ:tighter}
\frac{\sqrt{2}}{\sqrt{D_1}}\sqrt{\tau}\leq C_s \tau
\end{equation}
if the level of noise $\tau$ satisfies
\begin{equation}\label{equ:tau1}
\tau > \frac{2}{D_1C_s^2}.
\end{equation}
Assume that the recovery conditions listed in Table \ref{tab:exact} are satisfied for each model, respectively. Then, we can summarize their corresponding ranges of $\tau$ in Table \ref{tab:robust} such that the robust recovery bound \eqref{equ:stablerecovery} of the springback-penalized model \eqref{equ:CSproblem} is tighter than all the others in the sense of \eqref{equ:tighter}.
\begin{table}[htbp]
  \centering
  \setlength{\abovecaptionskip}{0pt}
\setlength{\belowcaptionskip}{10pt}
  \caption{Ranges of the level of noise such that the springback bound \eqref{equ:stablerecovery} is tighter than the bound \eqref{equ:linear} in the sense of \eqref{equ:tighter}.}\label{tab:robust}
  \begin{tabular}{c|c}
  \hline
    Penalty  & When the springback bound \eqref{equ:stablerecovery} is tighter than the bound \eqref{equ:linear}    \\ \hline
    $\ell_1$  \cite{candes2006stable,candes2005error}  & $\tau > \frac{(\sqrt{3s}+\sqrt{s})(\sqrt{3}\sqrt{1-\delta_{4s}}-\sqrt{1+\delta_{3s}})^2}{4\alpha(\sqrt{1+\delta_{3s}}+\sqrt{1-\delta_{4s}})}$ \\ \hline
    $\ell_p$  ($0<p<1$)  \cite{saab2008stable} & $\tau>\frac{(\sqrt{3s}+\sqrt{s})\left((1-\delta_{4s})^{p/2}-(1+\delta_{3s})^{p/2}3^{p/2-1}\right)^{2/p}}{\alpha(\sqrt{1-\delta_{4s}}+\sqrt{1+\delta_{3s}})\left(1+\frac{1}{(2/p-1)3^{2/p-1}}\right)}$ \\ \hline
    transformed $\ell_1$ \cite{zhang2018minimization} & $\tau>\frac{4(\sqrt{3s}+\sqrt{s})(1-\delta_{3s})\left(\frac{\beta}{\beta+1}\sqrt{3}\sqrt{1-\delta_{4s}}-\sqrt{1+\delta_{3s}}\right)^2}{\alpha(\sqrt{1-\delta_{4s}}+\sqrt{1+\delta_{3s}})\left(\frac{\beta}{\beta+1}\sqrt{3}\sqrt{1-\delta_{4s}}-\sqrt{1+\delta_{3s}}+\sqrt{3s}\sqrt{1-\delta_{3s}}\right)^2}$ \\ \hline
    $\ell_{1-2}$ \cite{yan2017sparse} & $\tau>\frac{(\sqrt{3s}+\sqrt{s})(\sqrt{a(s)(1-\delta_{4s})}-\sqrt{1+\delta_{3s}})^2}{\alpha(\sqrt{1-\delta_{4s}}+\sqrt{1+\delta_{3s}})(\sqrt{3s}-\sqrt{s\cdot a(s)})^2}$  \\
    \hline
  \end{tabular}
\end{table}

These ranges on $\tau$ look complicated. To have a better idea, we consider a toy example with $s=20$, $\delta_{3s}=1/4$, $\delta_{4s}=1/3$, $\alpha=1$ for the spingback penalty \eqref{equ:spb}, and $\beta=1$ for the transformed $\ell_1$ penalty. Then, the springback-penalized model \eqref{equ:CSproblem} would give a tighter bound in the sense of \eqref{equ:tighter} than the $\ell_1$-, $\ell_{0.2}$-, $\ell_{0.5}$-, $\ell_{0.999}$-, transformed $\ell_1$-, and  $\ell_{1-2}$-penalized models if $\tau>0.1385,~   0.0271,~0.2333,~0.1391,~0.0807$, and $2.8652\times 10^{-4}$, respectively.

Can we further improve the robust recovery result \eqref{equ:stablerecovery} in Theorem \ref{thm:sparse}? The following proposition suggests a potential improvement. Moreover, without any requirement on $\alpha$, this proposition also means, even if the posterior verification \eqref{equ:alpha} is violated sometimes, the springback-penalized model \eqref{equ:CSproblem} may still give a good recovery. Note that this proposition is only of conceptual sense, because its assumption $\left<x^{\rm{opt}},x^{\rm{opt}}-\bar{x}\right>\leq 0$ is not verifiable. Nevertheless, it helps us discern a possibility of achieving a better recovery bound than \eqref{equ:stablerecovery}.

\begin{proposition}\label{thm:sparseimproved}
Let $\bar{x}\in\mathbb{R}^n$ be an unknown $s$-sparse vector to be recovered. For a given sensing matrix $A\in\mathbb{R}^{m\times n}$, let $b\in\mathbb{R}^m$ be a vector of measurements from $b=A\bar{x}+e$ with $\|e\|_2\leq\tau$, and let $\delta_{3s}$ and $\delta_{4s}$ be the $3s$- and $4s$-RIC's of $A$, respectively. Let $x^{\rm{opt}}$ be the minimizer of the problem \eqref{equ:CSproblem} and assume $\left<x^{\rm{opt}},x^{\rm{opt}}-\bar{x}\right>\leq 0$. Suppose $A$ satisfies $\delta_{3s}<3(1-\delta_{4s})-1$, then $x^{\rm{opt}}=\bar{x}$ when $\tau=0$; and $x^{\rm{opt}}$ satisfies
\begin{equation}\label{equ:stablerecoveryimproved}
\|x^{\rm{opt}}-\bar{x}\|_2\leq \sqrt{\frac{D_2^2}{4D_1^2}+\frac{2}{D_1}\tau}-\frac{D_2}{2D_1}
\end{equation}
 when $\tau\geq0$, where $D_1$ is the constant \eqref{equ:D1} given in Theorem \ref{thm:sparse} and
\begin{equation}\label{equ:D2}
 D_2=\frac{\sqrt{3}\sqrt{1-\delta_{4s}}-\sqrt{1+\delta_{3s}}}{\sqrt{3}+1}.
\end{equation}
\end{proposition}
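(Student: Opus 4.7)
The plan is to follow the proof of Theorem \ref{thm:sparse} almost verbatim, but to exploit the extra assumption $\langle x^{\rm{opt}}, x^{\rm{opt}}-\bar{x}\rangle \leq 0$ instead of invoking the posterior verification \eqref{equ:alpha}. Setting $v = x^{\rm{opt}} - \bar{x}$ and repeating the decomposition $\Lambda_0^c = \Lambda_1 \cup \Lambda_2 \cup \cdots \cup \Lambda_\ell$ (with $|\Lambda_i| = 3s$, sorted by decreasing magnitude of $v_{\Lambda_0^c}$), one reaches the same quadratic lower bound on $\|Av\|_2$ as in \eqref{equ:Av}, namely
\begin{equation*}
\|Av\|_2 \geq D_1 \|v\|_2^2 + \tilde{D}_2 \|v\|_2 - \alpha\,\frac{\sqrt{1-\delta_{4s}}+\sqrt{1+\delta_{3s}}}{\sqrt{3s}+\sqrt{s}}\,\langle x^{\rm{opt}}, v\rangle,
\end{equation*}
where $\tilde{D}_2 := \frac{\sqrt{1-\delta_{4s}}\sqrt{3s}-\sqrt{1+\delta_{3s}}\sqrt{s}}{\sqrt{3s}+\sqrt{s}}$. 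A quick algebraic simplification (dividing numerator and denominator by $\sqrt{s}$) shows $\tilde{D}_2 = D_2$, where $D_2$ is defined in \eqref{equ:D2}.

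Next, I would use the hypothesis $\langle x^{\rm{opt}}, v\rangle \leq 0$ to drop the last (non-positive) term entirely, obtaining the clean quadratic inequality
\begin{equation*}
\|Av\|_2 \geq D_1 \|v\|_2^2 + D_2 \|v\|_2.
\end{equation*}
Under the RIP assumption $\delta_{3s} < 3(1-\delta_{4s}) - 1$, both $D_1$ and $D_2$ are strictly positive. In the noise-free case $\tau=0$, the feasibility of both $x^{\rm{opt}}$ and $\bar{x}$ forces $Av = 0$, hence $D_1\|v\|_2^2 + D_2\|v\|_2 \leq 0$, which by positivity of $D_1, D_2$ yields $\|v\|_2 = 0$, i.e., $x^{\rm{opt}} = \bar{x}$.

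For the noisy case $\tau > 0$, the triangle inequality gives $\|Av\|_2 \leq \|Ax^{\rm{opt}} - b\|_2 + \|A\bar{x} - b\|_2 \leq 2\tau$, so
\begin{equation*}
D_1 \|v\|_2^2 + D_2 \|v\|_2 - 2\tau \leq 0.
\end{equation*}
Viewing the left-hand side as a quadratic in the non-negative quantity $\|v\|_2$, the quadratic formula yields
\begin{equation*}
\|v\|_2 \leq \frac{-D_2 + \sqrt{D_2^2 + 8 D_1 \tau}}{2 D_1} = \sqrt{\frac{D_2^2}{4D_1^2} + \frac{2\tau}{D_1}} - \frac{D_2}{2D_1},
\end{equation*}
which is precisely \eqref{equ:stablerecoveryimproved}. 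There is no real obstacle here, as the bulk of the work has already been done in Theorem \ref{thm:sparse}; the only subtlety is to notice that the assumption $\langle x^{\rm{opt}}, v\rangle \leq 0$ is exactly what is needed to dispense with the posterior verification \eqref{equ:alpha} and to retain the linear-in-$\|v\|_2$ term $D_2\|v\|_2$ rather than having to absorb it. Retaining this linear term is what sharpens the $\sqrt{\tau}$-type bound in \eqref{equ:stablerecovery} to the smaller expression \eqref{equ:stablerecoveryimproved}, which behaves linearly in $\tau$ for small $\tau$.
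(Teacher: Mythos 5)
Your proposal is correct and matches the paper's own proof essentially step for step: both drop the non-positive inner-product term in \eqref{equ:Av} using the hypothesis $\left<x^{\rm{opt}},v\right>\leq 0$, arrive at $\|Av\|_2\geq D_1\|v\|_2^2+D_2\|v\|_2$, and then solve the resulting quadratic inequality against $\|Av\|_2\leq 2\tau$. The identification $\tilde{D}_2=D_2$ via dividing by $\sqrt{s}$ is also exactly how the paper's constant \eqref{equ:D2} arises.
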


\begin{proof}
In the case of $\left<x^{\rm{opt}},v\right>\leq 0$, it follows straightforwardly from \eqref{equ:Av} that
    \begin{equation*}\begin{split}
    \|Av\|_2
    \geq&\frac{\alpha}{2}\left(\frac{\sqrt{1-\delta_{4s}}+\sqrt{1+\delta_{3s}}}{\sqrt{3s}+\sqrt{s}}  \right)\|v\|_2^2+\frac{\sqrt{1-\delta_{4s}}\sqrt{3s}-\sqrt{1+\delta_{3s}}\sqrt{s}}{\sqrt{3s}+\sqrt{s}}\|v\|_2\\
    :=&D_1\|v\|_2^2+D_2\|v\|_2.
    \end{split}\end{equation*}
    The assumption $\delta_{3s}<3(1-\delta_{4s})-1$ guarantees $D_2>0$. Hence, when $\tau=0$, as $Av=A(x^{\rm{opt}}-\bar{x})=0$, we have $0=\|Av\|_2\geq D_1\|v\|_2^2+D_2\|v\|_2$, which implies $\|v\|_2=0$. When $\tau>0$, the inequality
    $$
    \|Av\|_2=\|Ax^{\rm{opt}}-A\bar{x}\|_2\leq\|Ax^{\rm{opt}}-b\|_2+\|A\bar{x}-b\|_2\leq2\tau
    $$
    implies
    \begin{equation*}
    \|v\|_2\leq\frac{\sqrt{D_2^2+8D_1\tau}-D_2}{2D_1}.
    \end{equation*}
The assertion is proved.
\end{proof}

\begin{remark}
The robust recovery result \eqref{equ:stablerecoveryimproved} is always better than \eqref{equ:stablerecovery} in Theorem \ref{thm:sparse} due to the subadditivity of the square root function. Under the conditions of Proposition \ref{thm:sparseimproved}, the bound \eqref{equ:stablerecoveryimproved} for the springback-penalized model \eqref{equ:CSproblem} is tighter than \eqref{equ:linear} in the sense of
\begin{equation*}
\sqrt{\frac{D_2^2}{4D_1^2}+\frac{2}{D_1}\tau}-\frac{D_2}{2D_1}<C_s\tau,
\end{equation*}
if the level of noise $\tau$ satisfies
\begin{equation*}\label{equ:tau2}
\tau > \frac{2-D_2C_s}{D_1C_s^2}=\left(1-\frac{D_2C_s}{2}\right)\frac{2}{D_1C_s^2}.
\end{equation*}
Comparing with \eqref{equ:tau1}, this improvement enlarges the value range of $\tau$. For example, if $C_s$ is the coefficient in the result \eqref{equ:l1sparserecovery} of the BP model \eqref{equ:l1} , then $1-D_2C_s/{2}$ is approximately 0.2679.
\end{remark}

\section{Springback-penalized model for nearly sparse signal recovery}\label{sec:signal2}

We then study the stable recovery of the springback-penalized model \eqref{equ:CSproblem} when $\bar{x}$ is nearly sparse and the measurements are noisy.

\subsection{Recovery guarantee using the springback-penalized model}
If the signal $\bar{x}$ to be recovered is nearly $s$-sparse, then we have the following stable recovery theorem for the springback-penalized model \eqref{equ:CSproblem}.

\begin{theorem}[recovery of nearly sparse signals]\label{thm:nonsparse}
Let $\bar{x}\in\mathbb{R}^n$ be an unknown vector to be recovered. For a given sensing matrix $A\in\mathbb{R}^{m\times n}$, let $b\in\mathbb{R}^m$ be a vector of measurements from $b=A\bar{x}+e$ with $\|e\|_2\leq\tau$, and let $\delta_{3s}$ and $\delta_{4s}$ be the $3s$- and $4s$-RIC's of $A$, respectively. Let $\bar{x}_s\in\mathbb{R}^n$ be the truncated vector corresponding to the $s$ largest values of $\bar{x}$ (in absolute value). Suppose $A$ satisfies $\delta_{3s}<3(1-\delta_{4s})-1$ and $\alpha$ satisfies \eqref{equ:alpha}, then the minimizer $x^{\rm{opt}}$ of the problem \eqref{equ:CSproblem} satisfies
\begin{equation}\label{equ:stablerecoverynonsparse}
\|x^{\rm{opt}}-\bar{x}\|_2\leq \sqrt{\frac{2}{D_1}\tau+\frac{4}{\alpha}\|\bar{x}-\bar{x}_s\|_1},
\end{equation}
where $D_1$ is the constant \eqref{equ:D1} given in Theorem \ref{thm:sparse}.
\end{theorem}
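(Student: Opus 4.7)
The plan is to mirror the proof of Theorem \ref{thm:sparse} step by step, tracking the extra contribution coming from the tail $\|\bar{x}-\bar{x}_s\|_1$. Write $v=x^{\rm{opt}}-\bar{x}$ and let $\Lambda_0$ be the support of $\bar{x}_s$ (the index set of the $s$ largest-magnitude entries of $\bar{x}$). Since $\bar{x}$ is no longer $s$-sparse, the first modification appears in the ``cone'' type inequality: starting from the optimality relation
$\|x^{\rm{opt}}\|_1-\tfrac{\alpha}{2}\|x^{\rm{opt}}\|_2^2\leq \|\bar{x}\|_1-\tfrac{\alpha}{2}\|\bar{x}\|_2^2$
and splitting $\|x^{\rm{opt}}\|_1 = \|\bar{x}_{\Lambda_0}+v_{\Lambda_0}\|_1+\|\bar{x}_{\Lambda_0^c}+v_{\Lambda_0^c}\|_1$ via the triangle inequality, I will obtain the analogue of the cone inequality with an additional $2\|\bar{x}-\bar{x}_s\|_1$ term, namely
\[
\|v_{\Lambda_0^c}\|_1 \leq \|v_{\Lambda_0}\|_1+2\|\bar{x}-\bar{x}_s\|_1 -\tfrac{\alpha}{2}\|v\|_2^2+\alpha\langle x^{\rm{opt}},v\rangle,
\]
using the identity $\tfrac{\alpha}{2}(\|x^{\rm{opt}}\|_2^2-\|\bar{x}\|_2^2)=-\tfrac{\alpha}{2}\|v\|_2^2+\alpha\langle x^{\rm{opt}},v\rangle$ exactly as in Theorem \ref{thm:sparse}.

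Next I would partition $\Lambda_0^c$ into blocks $\Lambda_1,\Lambda_2,\ldots$ of size $3s$ in decreasing order of $|v_i|$, set $\Lambda_{01}=\Lambda_0\cup\Lambda_1$, and derive the same RIP lower bound
$\|Av\|_2\geq \sqrt{1-\delta_{4s}}\|v_{\Lambda_{01}}\|_2-\sqrt{1+\delta_{3s}}\sum_{i\geq 2}\|v_{\Lambda_i}\|_2$,
together with the tail control $\sum_{i\geq 2}\|v_{\Lambda_i}\|_2 \leq \|v_{\Lambda_0^c}\|_1/\sqrt{3s}$. Feeding the modified cone inequality into the tail control and using $\|v_{\Lambda_0}\|_1\leq\sqrt{s}\|v_{\Lambda_{01}}\|_2$ produces two ``extra'' terms proportional to $\|\bar{x}-\bar{x}_s\|_1/\sqrt{3s}$: one appearing with a negative sign in the lower bound for $\|Av\|_2$, and one entering the inequality that expresses $\|v_{\Lambda_{01}}\|_2$ from below in terms of $\|v\|_2$. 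Combining them exactly as in the sparse proof, and once more invoking condition \eqref{equ:alpha} with Cauchy--Schwarz to absorb the $\alpha\langle x^{\rm{opt}},v\rangle$ term, the quadratic lower bound should become
\[
\|Av\|_2 \geq D_1\|v\|_2^2-\frac{4D_1}{\alpha}\|\bar{x}-\bar{x}_s\|_1,
\]
where the coefficient $4D_1/\alpha$ emerges from collecting the $\sqrt{1+\delta_{3s}}$ and $\sqrt{1-\delta_{4s}}$ pieces over the common denominator $\sqrt{3s}+\sqrt{s}$, and the factor $2$ from the cone inequality doubling into $4$ after both substitutions.

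Finally, I would combine this with the feasibility bound $\|Av\|_2\leq 2\tau$, which yields $D_1\|v\|_2^2\leq 2\tau+\tfrac{4D_1}{\alpha}\|\bar{x}-\bar{x}_s\|_1$; dividing by $D_1$ and taking square roots delivers \eqref{equ:stablerecoverynonsparse}. The main obstacle is the bookkeeping in the third step: I must propagate the $2\|\bar{x}-\bar{x}_s\|_1$ term through \emph{both} the tail bound for $\sum\|v_{\Lambda_i}\|_2$ and the lower bound for $\|v_{\Lambda_{01}}\|_2$, so that after the final combination its overall coefficient matches $4D_1/\alpha$ exactly; a sign or factor-of-two mismatch at any intermediate step would break the clean form $\sqrt{(2/D_1)\tau+(4/\alpha)\|\bar{x}-\bar{x}_s\|_1}$. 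The treatment of the $\langle x^{\rm{opt}},v\rangle$ term via condition \eqref{equ:alpha} is identical to the sparse case and requires no new idea.
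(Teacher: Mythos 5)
Your proposal is correct and follows essentially the same route as the paper's own proof: the same modified cone inequality with the extra $2\|\bar{x}-\bar{x}_s\|_1$ term, the same block decomposition and tail control, the same propagation of the tail term through both the bound on $\sum_{i\geq 2}\|v_{\Lambda_i}\|_2$ and the lower bound on $\|v_{\Lambda_{01}}\|_2$, yielding $\|Av\|_2\geq D_1\|v\|_2^2-\tfrac{4D_1}{\alpha}\|\bar{x}-\bar{x}_s\|_1$ after absorbing the $\alpha\langle x^{\rm{opt}},v\rangle$ term via \eqref{equ:alpha} and dropping the nonnegative linear term, exactly as in the paper. The bookkeeping you flag as the main obstacle does work out to the coefficient $4D_1/\alpha$, since $2\cdot\frac{\sqrt{1-\delta_{4s}}+\sqrt{1+\delta_{3s}}}{\sqrt{3s}+\sqrt{s}}=\frac{4D_1}{\alpha}$.
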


\begin{proof}
Let $x^{\rm{opt}}=\bar{x}+v$, and $\Lambda_0$ be the support of $\bar{x}_s$. It is clear that $v_{\Lambda_0}=x^{\rm{opt}}_{\Lambda_0}-\bar{x}_s$ and $v_{\Lambda_0^c}=x^{\rm{opt}}_{\Lambda_0^c}-\bar{x}_{\Lambda_0^c}$. We know that
\begin{equation*}
\|x^{\rm{opt}}\|_1-\frac{\alpha}{2}\|x^{\rm{opt}}\|_2^2\leq\|\bar{x}\|_1-\frac{\alpha}{2}\|\bar{x}\|_2^2=\|\bar{x}_s\|_1+\|\bar{x}_{\Lambda_0^c}\|_1-\frac{\alpha}{2}\|\bar{x}\|_2^2.
\end{equation*}
On the other hand, it holds that
\begin{equation*}\begin{split}
\|x^{\rm{opt}}\|_1-\frac{\alpha}{2}\|x^{\rm{opt}}\|_2^2
&=\|\bar{x}_s+v_{\Lambda_0}\|_1+\|\bar{x}_{\Lambda_0^c}+v_{\Lambda_0^c}\|_1-\frac{\alpha}{2}\|\bar{x}+v\|_2^2\\
&\geq \|\bar{x}_s\|_1-\|v_{\Lambda_0}\|_1+\|v_{\Lambda_0^c}\|_1-\|\bar{x}_{\Lambda_0^c}\|_1-\frac{\alpha}{2}\left(\|\bar{x}\|_2^2+2\left<\bar{x},v\right>+\|v\|_2^2\right).
\end{split}\end{equation*}
Then, $v$ satisfies the following estimation:
\begin{equation*}\begin{split}
\|v_{\Lambda_0^c}\|_1
\leq& \|v_{\Lambda_0}\|_1+2\|\bar{x}-\bar{x}_s\|_1-\frac{\alpha}{2}\|v\|_2^2+\alpha\|v\|_2^2+\alpha\left<\bar{x},v\right>\\
=& \|v_{\Lambda_0}\|_1+2\|\bar{x}-\bar{x}_s\|_1-\frac{\alpha}{2}\|v\|_2^2+\alpha\left<x^{\rm{opt}},v\right>.
\end{split}\end{equation*}

We divide $\Lambda_0^c$ into subsets of size $3s$, $\Lambda_0^c=\Lambda_1\bigcup\Lambda_2\bigcup\cdots\bigcup\Lambda_{\ell}$, in terms of decreasing order of magnitudes (in absolute value) of $v_{\Lambda_0^c}$. Denoting $\Lambda_{01}=\Lambda_0\bigcup\Lambda_1$ and using the RIP of A, we have
\begin{equation*}
\|Av\|_2\geq\|A_{\Lambda_{01}}v_{\Lambda_{01}}\|_2-\left\|\sum_{i=2}^{\ell}A_{\Lambda_i}v_{\Lambda_i}\right\|_2\geq\sqrt{1-\delta_{4s}}\|v_{\Lambda_{01}}\|_2-\sqrt{1+\delta_{3s}}\sum_{i=2}^{\ell}\|v_{\Lambda_i}\|_2.
\end{equation*}
As proved for  Theorem \ref{thm:sparse}, we have $\sum_{i=2}^{\ell}\|v_{\Lambda_i}\|_2\leq\|v_{\Lambda_0^c}\|_1/\sqrt{3}$ and $\|v_{\Lambda_0}\|_1\leq\sqrt{s}\|v_{\Lambda_{01}}\|_2$. Thus, we obtain
\begin{equation*}
\sum_{i=2}^{\ell}\|v_{\Lambda_i}\|_2\leq\frac{1}{\sqrt{3s}}\left(\sqrt{s}\|v_{\Lambda_{01}}\|_2+2\|\bar{x}-\bar{x}_s\|_1-\frac{\alpha}{2}\|v\|_2^2+\alpha\left<x^{\rm{opt}},v\right>\right).
\end{equation*}
Furthermore, it holds that
\begin{equation}\label{equ:Avcase3}\begin{split}
\|Av\|_2\geq&\left(\sqrt{1-\delta_{4s}}-\frac{\sqrt{1+\delta_{3s}}\sqrt{s}}{\sqrt{3s}}\right)\|v_{\Lambda_{01}}\|_2-\frac{2\sqrt{1+\delta_{3s}}}{\sqrt{3s}}\|\bar{x}-\bar{x}_s\|_1\\
&+\frac{\alpha\sqrt{1+\delta_{3s}}}{2\sqrt{3s}} \|v\|_2^2-\frac{\alpha\sqrt{1+\delta_{3s}}}{\sqrt{3s}}\left<x^{\rm{opt}},v\right>.
\end{split}\end{equation}
As
\begin{equation*}\begin{split}
\|v\|_2&\leq\|v_{\Lambda_{01}}\|_2+\sum_{i=2}^{\ell}\|v_{\Lambda_i}\|_2\\
&\leq\left(1+\frac{\sqrt{s}}{\sqrt{3s}}\right)\|v_{\Lambda_{01}}\|_2+\frac{2}{\sqrt{3s}}\|\bar{x}-\bar{x}_s\|_1-\frac{\alpha}{2\sqrt{3s}} \|v\|_2^2+\frac{\alpha}{\sqrt{3s}}\left<x^{\rm{opt}},v\right>,
\end{split}\end{equation*}
we have
\begin{equation*}
\|v_{\Lambda_{01}}\|_2\geq\frac{\sqrt{3s}}{\sqrt{3s}+\sqrt{s}}\left(\frac{\alpha}{2\sqrt{3s}}\|v\|_2^2+\|v\|_2-\frac{\alpha}{\sqrt{3s}}\left<x^{\rm{opt}},v\right>-\frac{2}{\sqrt{3s}}\|\bar{x}-\bar{x}_s\|_1\right).
\end{equation*}
Recall the assumption $\delta_{3s}<3(1-\delta_{4s})-1$. The coefficient of $\|v_{\Lambda_{01}}\|_2$ in \eqref{equ:Avcase3} is positive, and it follows that
\begin{equation}\label{equ:Av2}\begin{split}
\|Av\|_2
\geq& \frac{\sqrt{1-\delta_{4s}}\sqrt{3s}-\sqrt{1+\delta_{3s}}\sqrt{s}}{\sqrt{3s}+\sqrt{s}}\left(\frac{\alpha}{2\sqrt{3s}}\|v\|_2^2+\|v\|_2  -\frac{\alpha}{\sqrt{3s}}\left<x^{\rm{opt}},v\right>  -\frac{2}{\sqrt{3s}}\|\bar{x}-\bar{x}_s\|_1\right) \\
&+\frac{\alpha\sqrt{1+\delta_{3s}}}{2\sqrt{3s}} \|v\|_2^2-\frac{\alpha\sqrt{1+\delta_{3s}}}{\sqrt{3s}}\left<x^{\rm{opt}},v\right>-\frac{2\sqrt{1+\delta_{3s}}}{\sqrt{3s}}\|\bar{x}-\bar{x}_s\|_1\\
=&\frac{\alpha}{2}\left(\frac{\sqrt{1-\delta_{4s}}+\sqrt{1+\delta_{3s}}}{\sqrt{3s}+\sqrt{s}}  \right)\|v\|_2^2+\frac{\sqrt{1-\delta_{4s}}\sqrt{3s}-\sqrt{1+\delta_{3s}}\sqrt{s}}{\sqrt{3s}+\sqrt{s}}\|v\|_2\\
    &-\alpha\left(\frac{\sqrt{1-\delta_{4s}}+\sqrt{1+\delta_{3s}}}{\sqrt{3s}+\sqrt{s}}  \right)\left<x^{\rm{opt}},v\right>-2\left(\frac{\sqrt{1-\delta_{4s}}+\sqrt{1+\delta_{3s}}}{\sqrt{3s}+\sqrt{s}}  \right)\|\bar{x}-\bar{x}_s\|_1.
\end{split}\end{equation}
If $\left<x^{\rm{opt}},v\right>\leq0$, then $\|Av\|_2\geq D_1\|v\|_2^2-\frac{4}{\alpha}D_1\|\bar{x}-\bar{x}_s\|_1$. If $\left<x^{\rm{opt}},v\right>>0$, then the condition \eqref{equ:alpha} on $\alpha$ guarantees
\begin{equation*}\begin{split}
\frac{\sqrt{1-\delta_{4s}}\sqrt{3s}-\sqrt{1+\delta_{3s}}\sqrt{s}}{\sqrt{3s}+\sqrt{s}}\|v\|_2-\alpha\left(\frac{\sqrt{1-\delta_{4s}}+\sqrt{1+\delta_{3s}}}{\sqrt{3s}+\sqrt{s}}  \right)\left<x^{\rm{opt}},v\right>\geq 0,
\end{split}\end{equation*}
which is shown in the proof of Theorem \ref{thm:sparse}. Hence, we also have $\|Av\|_2\geq D_1\|v\|_2^2-\frac{4}{\alpha}D_1\|\bar{x}-\bar{x}_s\|_1$.
As $\|Av\|_2=\|Ax^{\rm{opt}}-A\bar{x}\|_2\leq\|Ax^{\rm{opt}}-b\|_2+\|A\bar{x}-b\|_2\leq2\tau$, we have
\begin{equation*}
2\tau\geq D_1\|v\|_2^2-\frac{4}{\alpha}D_1\|\bar{x}-\bar{x}_s\|_1,
\end{equation*}
which implies \eqref{equ:stablerecoverynonsparse}.
\end{proof}

Similar to the improvement in Proposition \ref{thm:sparseimproved}, the above stable recovery result can be improved as follows.
\begin{proposition}\label{thm:nonsparseimproved}
Let $\bar{x}\in\mathbb{R}^n$ be an unknown vector to be recovered. For a given sensing matrix $A\in\mathbb{R}^{m\times n}$, let $b\in\mathbb{R}^m$ be a vector of measurements from $b=A\bar{x}+e$ with $\|e\|_2\leq\tau$, and let $\delta_{3s}$ and $\delta_{4s}$ be the $3s$- and $4s$-RIC's of $A$, respectively. Let $x^{\rm{opt}}$ be the minimizer of the problem \eqref{equ:CSproblem} and assume $\left<x^{\rm{opt}},x^{\rm{opt}}-\bar{x}\right>\leq 0$. Let $\bar{x}_s\in\mathbb{R}^n$ be the truncated vector corresponding to the $s$ largest values of $\bar{x}$ (in absolute value). Suppose $A$ satisfies $\delta_{3s}<3(1-\delta_{4s})-1$,
then $x^{\rm{opt}}$ satisfies
\begin{equation*}
\|x^{\rm{opt}}-\bar{x}\|_2\leq\sqrt{\frac{D_2^2}{4D_1^2}+\frac{2}{D_1}\tau+\frac{4}{\alpha}\|\bar{x}-\bar{x}_s\|_1}-\frac{D_2}{2D_1},
\end{equation*}
where $D_1$ and $D_2$ are the constants \eqref{equ:D1} and \eqref{equ:D2} given in Theorem \ref{thm:sparse} and Proposition \ref{thm:sparseimproved}, respectively.
\end{proposition}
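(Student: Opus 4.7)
The plan is to reuse the machinery built in the proof of Theorem \ref{thm:nonsparse} and sharpen the final step by exploiting the sign assumption $\langle x^{\rm{opt}}, x^{\rm{opt}} - \bar{x}\rangle \leq 0$, in direct analogy with how Proposition \ref{thm:sparseimproved} sharpens Theorem \ref{thm:sparse}. Setting $v = x^{\rm{opt}} - \bar{x}$, the hypothesis is just $\langle x^{\rm{opt}}, v\rangle \leq 0$.

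First, I would repeat verbatim the derivation in the proof of Theorem \ref{thm:nonsparse} that leads to the key inequality \eqref{equ:Av2}. This derivation uses only the optimality of $x^{\rm{opt}}$, the triangle inequality on $\|\bar{x}\|_1 = \|\bar{x}_s\|_1 + \|\bar{x}_{\Lambda_0^c}\|_1$, the decomposition $\Lambda_0^c = \Lambda_1 \cup \cdots \cup \Lambda_\ell$ in decreasing order, and the RIP; it does not rely on any sign information about $\langle x^{\rm{opt}}, v\rangle$. Recalling that the coefficient of $\|v\|_2^2$ in \eqref{equ:Av2} is precisely $D_1$, the coefficient of $\|v\|_2$ is $D_2$, the coefficient of $\langle x^{\rm{opt}}, v\rangle$ is $-2D_1$, and the coefficient of $\|\bar{x} - \bar{x}_s\|_1$ is $-\tfrac{4D_1}{\alpha}$, the inequality reads
\begin{equation*}
\|Av\|_2 \geq D_1 \|v\|_2^2 + D_2 \|v\|_2 - 2D_1 \langle x^{\rm{opt}}, v\rangle - \tfrac{4D_1}{\alpha} \|\bar{x} - \bar{x}_s\|_1.
\end{equation*}

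Next comes the only place where the new hypothesis enters. Since $\langle x^{\rm{opt}}, v\rangle \leq 0$, the term $-2 D_1 \langle x^{\rm{opt}}, v\rangle$ is nonnegative and may simply be dropped, yielding
\begin{equation*}
\|Av\|_2 \geq D_1 \|v\|_2^2 + D_2 \|v\|_2 - \tfrac{4D_1}{\alpha}\|\bar{x} - \bar{x}_s\|_1.
\end{equation*}
This is exactly the improvement: the proof of Theorem \ref{thm:nonsparse} had to discard the linear term $D_2\|v\|_2$ in order to absorb $\langle x^{\rm{opt}}, v\rangle$ by the posterior bound \eqref{equ:alpha} on $\alpha$, whereas here the sign assumption makes $\alpha$-dependence on that term unnecessary, so the linear term survives. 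The assumption $\delta_{3s} < 3(1-\delta_{4s})-1$ ensures $D_2 > 0$, so the linear term genuinely tightens the estimate.

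Finally I would combine this with the feasibility bound $\|Av\|_2 \leq \|Ax^{\rm{opt}} - b\|_2 + \|A\bar{x} - b\|_2 \leq 2\tau$ to obtain
\begin{equation*}
D_1 \|v\|_2^2 + D_2 \|v\|_2 - \tfrac{4D_1}{\alpha}\|\bar{x} - \bar{x}_s\|_1 - 2\tau \leq 0,
\end{equation*}
which is a quadratic inequality in the scalar $\|v\|_2 \geq 0$ with positive leading coefficient $D_1$. Solving via the quadratic formula and keeping only the larger root gives
\begin{equation*}
\|v\|_2 \leq \frac{-D_2 + \sqrt{D_2^2 + 8 D_1 \tau + \tfrac{16 D_1^2}{\alpha}\|\bar{x} - \bar{x}_s\|_1}}{2 D_1},
\end{equation*}
and pulling $\tfrac{1}{2D_1}$ inside the square root produces the claimed bound. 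No step presents a real obstacle: Theorem \ref{thm:nonsparse} already did the heavy lifting, and the role of the sign assumption is purely to let one linear term survive the estimate before the quadratic formula is invoked.
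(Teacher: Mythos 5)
Your proposal is correct and follows essentially the same route as the paper: both start from the quadratic estimate \eqref{equ:Av2}, use the sign assumption $\left<x^{\rm{opt}},v\right>\leq 0$ to drop the inner-product term (so that the linear term $D_2\|v\|_2$ survives instead of being sacrificed to the condition \eqref{equ:alpha}), combine with $\|Av\|_2\leq 2\tau$, and solve the resulting quadratic inequality in $\|v\|_2\geq 0$. Your identification of the coefficients in \eqref{equ:Av2} as $D_1$, $D_2$, $-2D_1$, and $-4D_1/\alpha$, and the final quadratic-formula computation, are both accurate.
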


\begin{proof}
In the case of $\left<x^{\rm{opt}},v\right>\leq 0$, it follows straightforwardly from the estimation \eqref{equ:Av2} that
    \begin{equation*}\begin{split}
    \|Av\|_2\geq D_1\|v\|_2^2+D_2\|v\|_2-\frac{4}{\alpha}D_1\|\bar{x}-\bar{x}_s\|_1.
    \end{split}\end{equation*}
    The assumption $\delta_{3s}<3(1-\delta_{4s})-1$ guarantees $D_2>0$. Therefore, it follows from the triangle inequality that
     \begin{equation*}
     \|Av\|_2=\|Ax^{\rm{opt}}-A\bar{x}\|_2\leq\|Ax^{\rm{opt}}-b\|_2+\|A\bar{x}-b\|_2\leq2\tau.
     \end{equation*}
We thus have
    \begin{equation}\label{equ:ineq}
    D_1\|v\|_2^2+D_2\|v\|_2-\frac{4}{\alpha}D_1\|\bar{x}-\bar{x}_s\|_1\leq2\tau,
    \end{equation}
    which gives the improved result by solving the system of inequalities \eqref{equ:ineq} and $\|v\|_2\geq 0$.
\end{proof}

\subsection{On the stable recovery}\label{sec:stable3}
If $\bar{x}$ is known to be $s$-sparse, then the estimation \eqref{equ:stablerecoverynonsparse} in Theorem \ref{thm:nonsparse} is reduced to \eqref{equ:stablerecovery} in Theorem \ref{thm:sparse}; and if the measurements are additionally noise-free, then both the estimations \eqref{equ:stablerecovery} and \eqref{equ:stablerecoverynonsparse} imply exact recovery of the signal $\bar{x}$. We compare the estimation \eqref{equ:stablerecoverynonsparse} with the estimation \eqref{equ:l1nonsparserecovery} for the BP model \eqref{equ:l1}. The following comparison is based on theoretical error bounds. We are interested in the case where the estimation \eqref{equ:stablerecoverynonsparse} is tighter than the estimation \eqref{equ:l1nonsparserecovery} in the sense of
\begin{equation}\label{equ:comp}
\sqrt{\frac{2}{D_1}\tau+\frac{4}{\alpha}\|\bar{x}-\bar{x}_s\|_1}\leq C_{1,s}\tau+C_{2,s}\frac{\|\bar{x}-\bar{x}_{s}\|_1}{\sqrt{s}},
\end{equation}
which is equivalent to
\begin{equation}\label{equ:sss}
\frac{s^{1/4}}{\sqrt{\alpha}}\sqrt{\frac{4(\sqrt3+1)}{\sqrt{1-\delta_{4s}}+\sqrt{1+\delta_{3s}}}\tau+\frac{4\|\bar{x}-\bar{x}_{s}\|_1}{\sqrt{s}}}\leq C_{1,s}\tau+C_{2,s}\frac{\|\bar{x}-\bar{x}_{s}\|_1}{\sqrt{s}}.
\end{equation}
Note that $s$ takes values among $\{1,2,\ldots,n\}$ and the right-hand side of \eqref{equ:sss} decreases as $s$ increases. If the left-hand side of \eqref{equ:sss} is smaller than the right-hand side of \eqref{equ:sss} for $s=1$ and the left-hand side is larger than the right-hand side for $s=n$, then there must exist a constant $C$ such that the inequality \eqref{equ:comp} holds for $s\leq C$. Besides, if $\bar{x}$ is known to be $s$-sparse, then $\|\bar{x}-\bar{x}_s\|_1=0$ and thus \eqref{equ:sss} implies the existence of $C$ without any assumption. Therefore, we have the following corollary.

\begin{corollary}\label{coro}
If $\bar{x}$ is $s$-sparse, then there exists a constant $C$ such that the inequality \eqref{equ:comp} holds for $s\leq C$, where
\begin{equation}\label{equ:C}
C = \alpha^2 C_{1,s}^4\tau^2\left(\frac{\sqrt{1-\delta_{4s}}+\sqrt{1+\delta_{3s}}}{4(\sqrt3+1)}\right)^2.
\end{equation}
When no information of the sparsity of $\bar{x}$ is known, if $\alpha$ satisfies
\begin{equation*}
\frac{\frac{4(\sqrt3+1)}{\sqrt{1-\delta_{4}}+\sqrt{1+\delta_{3}}}\tau+4\|\bar{x}-\bar{x}_{1}\|_1}{(C_{1,1}\tau+C_{2,1}\|\bar{x}-\bar{x}_{1}\|_1)^2}\leq\alpha\leq\frac{1}{C_{1,n}^2\tau}\frac{4(\sqrt3+1)\sqrt{n}}{\sqrt{1-\delta_{4n}}+\sqrt{1+\delta_{3n}}},
\end{equation*}
then there exists a constant $C$ such that the inequality \eqref{equ:comp} holds for $s\leq C$, where $C$ depends on $\alpha$, $\bar{x}$, $\tau$, $\delta_{3s}$, and $\delta_{4s}$.
\end{corollary}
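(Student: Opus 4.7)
The plan is to reduce \eqref{equ:comp} to the equivalent form \eqref{equ:sss} already derived in the excerpt, and then verify endpoint conditions on $s$ to deduce the existence of $C$ in each case.

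For the $s$-sparse case, I would substitute $\|\bar{x}-\bar{x}_s\|_1 = 0$ into \eqref{equ:sss}, which collapses to
\[
\frac{s^{1/4}}{\sqrt{\alpha}}\sqrt{\frac{4(\sqrt{3}+1)}{\sqrt{1-\delta_{4s}}+\sqrt{1+\delta_{3s}}}\,\tau}\leq C_{1,s}\tau.
\]
Squaring both sides to eliminate the outer square root gives an inequality linear in $\sqrt{s}$, and squaring once more isolates $s$, producing precisely the closed-form bound $s\leq C$ with $C$ given by \eqref{equ:C}. No other machinery is needed; this direction is a direct algebraic manipulation.

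For the general (nearly sparse) case, I would implement the endpoint/crossover strategy flagged in the paragraph preceding the corollary. At $s=1$, the truncated vector $\bar{x}_1$ retains only the largest-magnitude entry and $\|\bar{x}-\bar{x}_1\|_1$ survives; plugging $s=1$ into \eqref{equ:sss}, squaring, and isolating $\alpha$ produces exactly the stated lower bound on $\alpha$, under which \eqref{equ:sss} holds at $s=1$. At the other extreme $s=n$, the truncation is exact, so $\|\bar{x}-\bar{x}_n\|_1=0$ and \eqref{equ:sss} simplifies just as in the sparse case; demanding that \eqref{equ:sss} \emph{fail} at $s=n$ and solving for $\alpha$ yields the stated upper bound. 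Given that \eqref{equ:sss} is satisfied at the smallest admissible index and violated at the largest, I would define $C$ to be the largest $s\in\{1,\ldots,n\}$ for which \eqref{equ:sss} still holds; this $C$ is well-defined, lies in $[1,n)$, and depends on $\alpha$, $\bar{x}$, $\tau$, $\delta_{3s}$, and $\delta_{4s}$, as asserted.

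The main obstacle is that the existence argument for the second part is not a genuine intermediate-value theorem: neither side of \eqref{equ:sss} is manifestly monotone in the discrete index $s$ because $C_{1,s}$, $C_{2,s}$, $\delta_{3s}$, $\delta_{4s}$, and $\|\bar{x}-\bar{x}_s\|_1$ all vary with $s$. The cleanest reading of the corollary is therefore to take $C$ as the supremum of indices at which \eqref{equ:comp} continues to hold, rather than a sharp threshold separating valid from invalid regimes; the boundary computations at $s=1$ and $s=n$ guarantee that such a $C$ exists and is strictly less than $n$, which is exactly what the statement claims. The remaining work is bookkeeping of the squared inequalities, which is routine.
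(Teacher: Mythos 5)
Your proposal matches the paper's own (implicit) argument: the paper proves this corollary only via the paragraph preceding it, namely the reduction of \eqref{equ:comp} to \eqref{equ:sss}, the algebraic isolation of $s$ when $\|\bar{x}-\bar{x}_s\|_1=0$ to get \eqref{equ:C}, and the endpoint conditions at $s=1$ and $s=n$ that yield precisely the stated lower and upper bounds on $\alpha$ after squaring. Your caveat about the crossover argument not being a genuine monotonicity/intermediate-value argument is a fair observation about the paper's own reasoning rather than a gap in your reconstruction, and your reading of $C$ as the largest admissible index is a reasonable way to make the existence claim precise.
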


In virtue of random matrix theory, we give two examples to show that the condition $\delta_{3s}<3(1-\delta_{4s})-1$ on $A$ in Theorems \ref{thm:sparse} and \ref{thm:nonsparse} holds.
\begin{itemize}
    \item \emph{Random Gaussian matrices}: the entries of $A$ are i.i.d. Gaussian with mean zero and variance $1/m$. It was shown in \cite{candes2005error,candes2005decoding} that the condition $\delta_{3s}<3(1-\delta_{4s})-1$ holds with overwhelming probability when $s\leq C' m/\log(n/m)$, where $C'$ is a constant. Similar results were extended to sub-gaussian matrices in \cite{mendelson2007reconstruction}.
    \item \emph{Fourier ensemble}: $A$ is obtained by selecting $m$ rows from the $n\times n$ discrete Fourier transform and renormalizing the columns so that they are unit-normed. If the rows are selected at random, the condition $\delta_{3s}<3(1-\delta_{4s})-1$ holds with overwhelming probability for $s\leq C'm/(\log(n))^4$, where $C'$ is a constant. This was initially considered in \cite{candes2006near} and then improved in \cite{rudelson2008sparse}.
\end{itemize}

\begin{remark}
Assume that $\alpha$ satisfies the conditions in Theorem \ref{thm:nonsparse} and Corollary \ref{coro}. For a random Gaussian sensing matrix $A$, if $s\leq C' m/\log(n/m)$, then the RIP condition $\delta_{3s}<3(1-\delta_{4s})-1$ on $A$ holds with high probability; and additionally if, $C' m/\log(n/m)\leq C$, i.e.,
\begin{equation*}
m\exp\left(\frac{C'}{C}m\right)\leq n,
\end{equation*}
then the estimation \eqref{equ:stablerecoverynonsparse} is tighter than the estimation \eqref{equ:l1nonsparserecovery} in the sense of \eqref{equ:comp}. For a randomly subsampled Fourier sensing matrix $A$, if $s\leq C'm/(\log(n))^4$, then the RIP condition on $A$ holds with overwhelming probability; and additionally if $C'm/(\log(n))^4\leq C$, i.e.,
\begin{equation*}
m\leq\frac{C}{C'}(\log(n))^4,
\end{equation*}
then the estimation \eqref{equ:stablerecoverynonsparse} is tighter than the estimation \eqref{equ:l1nonsparserecovery} in the sense of \eqref{equ:comp}. In a nutshell, for a sensing matrix satisfying the RIP condition, if the number $m$ of observation data is limited, where ``limited'' can be characterized as the fact that $m$ is less than some constant depending on $n$, $C$, and $C'$, then the stable recovery using the springback-penalized model \eqref{equ:CSproblem} is guaranteed by a tighter bound than that of BP model \eqref{equ:l1} in the sense of \eqref{equ:comp}. These results can be extended to general orthogonal sensing matrices \cite{candes2006near}. Similar comparative results with other recovery models may also be derived if the recovery error bounds of these models are linear to $\tau$ and $\|\bar{x}-\bar{x}_{s}\|_1$, e.g., the $\ell_{1-2}$-penalized model \cite{yan2017sparse}.
\end{remark}

\section{Computational aspects of the springback-penalized model}\label{sec:computing}

Now we focus on computational aspects for the springback-penalized model \eqref{equ:CSproblem}. We first design an algorithm for solving \eqref{equ:CSproblem} in Section \ref{sec:dca}, and then discuss its convergence in Section \ref{sec:convergence} and elaborate on how to solve its subproblems in Section \ref{sec:subproblem}.

\subsection{DCA-springback: An algorithm for the springback penalized model}\label{sec:dca}

Some well-developed algorithms for solving difference-of-convex (DC) optimization problems can be easily implemented to solve the springback-penalized model \eqref{equ:CSproblem}. We focus on the simplest DCA in \cite{tao1997convex,tao1998dc} without any line-search step, which has been shown to be efficient for solving signal recovery problems, see, e.g., \cite{MR4201341,yin2015minimization,zhang2018minimization}.

Recall a standard DC optimization problem
\begin{equation}\label{equ:standardDC}
\min_{x\in\mathbb{R}^n}~f(x):=g(x)-h(x),
\end{equation}
where $g$ and $h$ are lower semicontinuous proper convex functions on $\mathbb{R}^n$. Here, $f$ is called a DC function, and $g-h$ is a DC decomposition of $f$. At each iteration, the DCA replaces the concave part $-h$ with a linear majorant and solves the resulting convex problem. That is, the DCA generates a sequence $\{x^k\}$ by solving the following subproblem iteratively:
\begin{equation*}
x^{k+1}\in\arg\min_{x\in\mathbb{R}^n}~ \left\{g(x)-\langle x-x^{k},\xi^k \rangle\right\},
\end{equation*}
where $\xi^{k}\in\partial (h(x^k))$. Note that the springback-penalized model \eqref{equ:CSproblem} can be written as
\begin{equation}\label{equ:convexconsDC}
\arg\min_{x\in\mathbb{R}^n} F(x):=\left(\|x\|_1 + \chi_{\Omega}(x)\right)-\frac{\alpha}{2}\|x\|_2^2,
\end{equation}
where $\Omega:=\{x\in\mathbb{R}^n:\|Ax-b\|_2\leq \tau\}$ and
\begin{equation*}
\chi_{\Omega}(x):=\begin{cases}
0, & x\in \Omega,\\
+\infty, &x\not\in \Omega,
\end{cases}
\end{equation*}
is the indictor function of the set $\Omega$. Thus, the DCA iterate scheme for solving \eqref{equ:convexconsDC} reads as
\begin{equation*}\begin{split}
x^{k+1}&\in \arg\min_{x} \left\{\left(\|x\|_1+\chi_{\Omega}(x)\right) - \langle x-x^{k},\xi^k \rangle\right\} \\
&=\arg\min_{x}~ \left\{ \|x\|_1-\langle x-x^{k},\xi^k \rangle\quad\text{s.t.}\quad x\in \Omega\right\}.
\end{split}\end{equation*}
More specifically, the resulting DCA is listed in Algorithm \ref{alg:DCA}, where $\epsilon_{\text{outer}}>0$ is the preset tolerance for iterations, and ``MaxIt" means the maximal number of iterations set beforehand.

\begin{algorithm}[htbp]\label{alg:DCA}
\caption{DCA-springback: Solving the constrained springback model \eqref{equ:CSproblem} via DCA}
\KwIn{Model parameters: $\alpha>0$ satisfying the condition \eqref{equ:alpharequirement};\\
Stopping criterion: $\epsilon_{\rm{outer}}>0$, ${\rm{MaxIt}>0}$;\\
Initialization: $k=0$, $x^0$ satisfying $\|Ax-b\|_2\leq \tau$;}
\While{$k<{\rm{MaxIt}}$ {\rm{and}} $\min\left\{\|x^{k+1}-x^{k}\|_2,\|x^{k+1}-x^{k}\|_2/\|x^k\|_2\right\}>\epsilon_{\rm{outer}}$}
{$\xi^k=\alpha x^k$\;
$x^{k+1}\in\arg\min\limits_{x}~\left\{\|x\|_1-\left<x-x^{k},\xi^k\right>~ \text{s.t.}~ \|Ax-b\|_2\leq \tau\right\}$\;
$k\leftarrow k+1$\;}
\end{algorithm}

\subsection{Convergence}\label{sec:convergence}
Recall that the \emph{modulus of strong convexity} of a convex function $f$ on $\mathbb{R}^n$, denoted by $d(f)$, is defined as $d(f):=\sup\{\nu>0:f(\cdot)-\frac{\nu}{2}\|\cdot\|_2^2\text{ is convex on }\mathbb{R}^n\}$. Then, according to \cite[Proposition A.1]{tao1998dc}, for a general DC function $f=g-h$, any sequence $\{x^k\}$ generated by the DCA satisfies
\begin{equation}\label{equ:DCAconvergence}
f(x^k)-f(x^{k+1})\geq\frac{d(g)+d(h)}{2}\|x^{k+1}-x^k\|_2^2,
\end{equation}
which immediately implies the decreasing property of $\{f(x^k)\}$ if at least one of $g$ and $h$ is strongly convex. Note that $\frac{\alpha}{2}\|x\|_2^2$ is strongly convex with modulus $\alpha$. Thus, starting with a feasible $x^0$, we have the decreasing property
\begin{equation}\label{equ:Fdecreasing}
F(x^k)- F(x^{k+1})\geq \frac{\alpha}{2}\|x^{k+1}-x^k\|_2^2,
\end{equation}
where $F$ is defined as \eqref{equ:convexconsDC}. However, the decreasing property \eqref{equ:Fdecreasing} of $F$ is not sufficient to ensure the convergence of DCA-springback. The function $F$ could be negative if $\alpha$ is inappropriately large. Note that for any $x^k$, we have
$$\|Ax^k\|_2- \|b\|_2\leq \|Ax^k-b\|_2\leq \tau.$$
Moreover, as $A$ is assumed to be full rank, we have $\sigma_{\min}(A)>0$. It follows from the geometric interpretation of the SVD \cite[Lecture 4]{MR1444820} that $\|Ax\|_2\geq\sigma_{\min}(A)$ for any $x\in\mathbb{R}^n$ on the unit sphere $\{x\in\mathbb{R}^n:\|x\|_2=1\}$. Thus, it holds that
$$0<\sigma_{\min}(A)\leq \min_{x\in\mathbb{R}^n}\frac{\|Ax\|_2}{\|x\|_2} = \min_{\|x\|_2=1}\|Ax\|_2,$$
and we have
\begin{equation}\label{equ:xkestimate}
\|x^k\|_2 \leq \frac{\|b\|_2+\tau}{\sigma_{\min}(A)}.
\end{equation}
Note that $\|x\|_1-\frac{\alpha}{2}\|x\|_2^2 \geq0$ and hence $F$ is non-negative if $\|x\|_2\leq 2/\alpha$. Clearly, if
\begin{equation}\label{equ:alpharequirement}
\alpha \leq \frac{2\sigma_{\min}(A)}{\|b\|_2+\tau},
\end{equation}
then $F(x^k)\geq 0$ for any $k\geq 0$ because all iterates $x^k$ satisfy \eqref{equ:xkestimate}. Together with the decreasing property \eqref{equ:Fdecreasing}, we can establish the convergence of DCA-springback easily by following the analytical framework in \cite{tao1997convex,tao1998dc}. Moreover, it follows the convergence of $\{F(x^k)\}$ and \eqref{equ:Fdecreasing} that  $\|x^{k+1}-x^k\|_2\rightarrow 0$ as $k\rightarrow \infty$.

\begin{remark}
Note that the condition \eqref{equ:alpha} depends on the RIP condition of $A$, and \eqref{equ:alpharequirement} depends on the conditioning of $A$. It is easy to deduce that if
\begin{equation}\label{equ:alphasituation2}
\frac{\sqrt{1-\delta_{4s}}\sqrt{3s}-\sqrt{1+\delta_{3s}}\sqrt{s}}{\sqrt{1-\delta_{4s}}+\sqrt{1+\delta_{3s}}}\leq \frac{2\sigma_{\min}(A)\|x^{\rm{opt}}\|_2}{\|b\|_2+\tau},
\end{equation}
then the condition \eqref{equ:alpharequirement} is implied by \eqref{equ:alpha}. Otherwise, it can be verified that the condition \eqref{equ:alpha} is implied by \eqref{equ:alpharequirement}.
\end{remark}

\subsection{Solving the subproblem of DCA-springback}\label{sec:subproblem}

For the proposed DCA-springback, its subproblem at each iteration is
\begin{equation}\label{equ:sub}
\min\limits_{x}~\|x\|_1-\left<x-x^{k},\xi^k\right>\quad \text{s.t.}\quad \|Ax-b\|_2\leq \tau.
\end{equation}
This problem can be easily solved by, e.g., the ADMM, which was originally proposed in \cite{MR388811} and had been well developed in the literature such as \cite{chen2016direct,he20121}. Some details are given for completeness. Note that the subproblem \eqref{equ:sub} can be reformulated as
\begin{equation*}\begin{split}
\min_{x,y,z}&~\|y\|_1-\langle x-x^{k},\xi^k\rangle\\
{\rm{s.t.}}&~ y=x,\\
&~z = Ax-b,\\
&~z \in \mathcal{B}(\tau),
\end{split}\end{equation*}
where $y,z\in\mathbb{R}^n$ are two auxiliary variables. With some trivial details skipped, the iterative scheme of the (scaled) ADMM  for the subproblem \eqref{equ:sub} reads as
\begin{equation}\label{equ:ADMM}\begin{cases}
x^{j+1}=(\rho A^{\rm{T}}A+\zeta I)^{-1}\left(\rho A^{\rm{T}}(b+z^j-\eta^j)+\xi^k+\zeta (y^j-u^j)\right),\\
y^{j+1}_i = {\rm{soft}}(x^{j+1}_i+u^j_i;1/\zeta) \text{ for } i=1,\ldots,n,\\
z^{j+1} = \mathcal{P}_{\mathcal{B}(\tau)}(Ax^{j+1}-b+\eta^j),\\
u^{j+1}=u^j+x^{j+1}-y^{j+1},\\
\eta^{j+1} = \eta^{j+1}+Ax^{j+1}-b-z^{j+1},
\end{cases}\end{equation}
where $u\in\mathbb{R}^n$ and $\eta \in\mathbb{R}^m$ are the Lagrange multipliers, $\zeta>0$ and $\rho>0$ are penalty parameters, and $\mathcal{P}_{\mathcal{B}(\tau)}(\cdot)$ is the projection operator onto the ball $\mathcal{B}(\tau)$.
If the measurement process is noise-free, i.e., $\tau=0$, then $z^j$ is always set as zero and the projection of the $z$-subproblem  in \eqref{equ:ADMM} is not necessary.

\section{Numerical experiments}\label{sec:numerical}
In this section, we implement the DCA-springback to the constrained springback-penalized model \eqref{equ:CSproblem} with simulated data. All codes were written by MATLAB R2022a, and all numerical experiments were conducted on a laptop (16 GB RAM, Intel® CoreTM i7-9750H Processor) with macOS Monterey 12.4.

We mainly show the effectiveness of the model \eqref{equ:CSproblem} for some specific scenarios and demonstrate the efficiency of the DCA-springback. Several state-of-the-art signal recovery solvers listed below are also tested for comparison.

\begin{enumerate}
\item [1)] The accelerated iterative hard thresholding (AIHT) algorithm in \cite{blumensath2009iterative}: solving the constrained model
    \begin{equation*}
    \min_{x\in\mathbb{R}^n}~\|Ax-b\|_2^2\quad\text{s.t.}\quad \|x\|_0\leq s
    \end{equation*}
    by the accelerated iterative hard thresholding, where $s$ is set beforehand to estimate the sparsity of $x$. For simplicity, we only choose the fundamental AIHT in  \cite{blumensath2009iterative}, and refer to, e.g., \cite{foucart2011hard,huang2018constructive,MR3398943,jiao2017iterative,needell2009cosamp,pati1993orthogonal}, for various other more sophisticated algorithms.

\item [2)] ADMM-$\ell_1$ \cite{MR388811}: solving the unconstrained $\ell_1$-penalized problem by the ADMM.

\item [3)] IRLS-$\ell_p$ ($0<p<1$) \cite{lai2013improved}: smoothing the unconstrained $\ell_p$-penalized model as
    \begin{equation*}\min_{x\in\mathbb{R}^n}~\frac12\|Ax-b\|_2^2+\lambda\|x\|_{p,\epsilon}^p\quad\text{with}\quad \|x\|_{p,\epsilon}^p:=\sum_{j=1}^n(x_j^2+\epsilon^2)^{p/2},
    \end{equation*}
    where $\epsilon>0$, and implementing the iteratively reweighted least squares (IRLS) algorithm.

\item [4)] DCA-TL1 \cite{zhang2018minimization}: solving the unconstrained transformed $\ell_1$-penalized model with parameter $\beta$ by DCA and implementing the ADMM for its subproblems.

\item [5)] DCA-$\ell_{1-2}$ \cite{yin2015minimization}: solving the unconstrained $\ell_{1-2}$-penalized model by DCA and implementing the ADMM for its subproblems.

\item [6)] DCA-MCP \cite{sun2018sparse}: solving the unconstrained MCP-penalized model by DCA and implementing the ADMM for its subproblems (the authors in \cite{sun2018sparse} consider the $\ell_1$-norm data fidelity term instead of the $\ell_2$ norm, but the implementation of the MCP term is similar).
\end{enumerate}

Note that the AIHT solves the $\ell_0$-penalized model directly; the ADMM-$\ell_1$ solves a convex surrogate model, and the others solve different non-convex approximate models.

\subsection{Setup}
We consider both incoherent and coherent sensing matrices to generate synthetic data for simulation. In the incoherent regime, we use random Gaussian matrices and random partial discrete cosine transform (DCT) matrices. For the former kind, its columns are generated by
\begin{equation*}
A_i\overset{\text{i.i.d.}}{\sim}\mathcal{N}(0,I_m/m),\quad i=1,\ldots,n,
\end{equation*}
where $\mathcal{N}(0,I_m/m)$ is the multivariate Gaussian distribution with location $0$ and covariance $I_m/m$. For the latter kind, its columns are generated by
\begin{equation*}
A_i=\frac{1}{\sqrt{m}}\cos(2i\pi\chi_i),\quad i = 1,\ldots,n,
\end{equation*}
where $\chi_i\in\mathbb{R}^m\overset{\text{i.i.d.}}{\sim}\mathcal{U}([0,1]^m)$ is uniformly and independently sampled from $[0,1]$. Note that both kinds of matrices have small RIP constants with high probability. The coherent regime consists of more ill-conditioned sensing matrices with higher coherence, and it is represented by the randomly oversampled partial DCT matrix in our experiments. A randomly oversampled partial DCT matrix is defined as
\begin{equation*}
        A_i=\frac{1}{\sqrt{m}}\cos(2i\chi_i/\mathcal{F}),\quad i = 1,\ldots,n,
\end{equation*}
where $\mathcal{F}\in\mathbb{N}$ is the \emph{refinement factor}. As $\mathcal{F}$ increases, $A$ becomes more coherent. A matrix sampled in this way cannot satisfy an RIP, and the sparse recovery with such a matrix is possible only if the non-zero elements of the ground-truth $\bar{x}$ are sufficiently separated. Technically, we select the elements of $\text{supp}(\bar{x})$ such that $\min_{j,k\in\text{supp}(\bar{x})}|j-k|\geq L,$ where $L$ is characterized as the \emph{minimum separation}.

We generate a ground-truth vector $\bar{x}\in\mathbb{R}^n$ with sparsity $s$ supported on a random index set (for incoherent matrices) or an index set satisfying the required minimum separation (for coherent matrices) with non-zero entries i.i.d. drawn from the normal distribution. We then compute $b = A\bar{x}$ as the measurements, and apply each solver to produce a reconstruction vector $x^*$ of $\bar{x}$. A reconstruction is considered successful if the relative error satisfies $\|x^*-\bar{x}\|_2/\|\bar{x}\|_2< 10^{-3}$. We test some cases with different sparsity $s$ of $\bar{x}$, different levels of noise, or different numbers of measurements. We run 100 times independently for each scenario and report the success rate, which is the ratio of the number of successful trials over 100. All experiments are run in parallel with the MATLAB Parallel Computing Toolbox.

The initial guess for all tested algorithms is $x^0=0$. The choice of the parameter $\alpha$ in the springback penalty is discussed in Section \ref{sec:alpha}. For outer iterates of the DCA-springback, we set $\rho=10^{5}$, $\text{MaxIt} = 10$, and $\epsilon_{\text{outer}}=10^{-5}$ (for noise-free measurements) or $10^{-3}$ (for noisy measurements). To implement the ADMM \eqref{equ:ADMM} for subproblems, we set $\zeta=10^{-5}$, $\tau = \|A\bar{x}-b\|_2$, and the stopping criterion as either $\|x^{j+1}-x^j\|_2/\max\{\|x^{j+1}\|_2,\|x^j\|_2\}<10^{-5}$ or the iteration number exceeds $500$. The DCA-TL1, the DCA-$\ell_{1-2}$, and the DCA-MCP are solved by DCA and their subproblems are also solved by the ADMM. We thus set the regularization parameter $\lambda = 10^{-6}$ and adopt the same parameters of the rest and stopping criterion as the DCA-springback. In particular, the parameter $\beta$ in the transformed $\ell_1$ penalty is set as 1 for the DCA-TL1, following \cite{zhang2018minimization}, and the parameter $\mu$ in the MCP is set as $1/\alpha$ for the DCA-MCP. For the AIHT, we set all parameters as \cite{blumensath2009iterative}. For the ADMM-$\ell_1$, we set $\lambda = 10^{-6}$, $\zeta = 10^{-5}$, $\epsilon_{\text{outer}}=10^{-5}$ (for noise-free measurements) and $10^{-3}$ (for noisy measurements), and $\text{MaxIt} = 5000$. For IRLS-$\ell_p$, we set $p = 0.5$, $\lambda = 10^{-6}$, $\epsilon_{\text{outer}}=10^{-8}$, and $\text{MaxIt} = 1000$.

\subsection{A subroutine for choosing the model parameter $\alpha$}\label{sec:alpha}
Let us focus on the parameter $\alpha$ of the springback penalty \eqref{equ:spb}. For an $128\times 512$ random Gaussian matrix, we test the DCA-springback with different $\alpha$ varying among $\{0.2, 0.4, 0.6, 0.8, 1\}$, and different levels of sparsity $s$ among $\{25,27,\ldots,65\}$. The DCA-springback with $\alpha=0.6$ or $0.8$, indicated by success rates in Figure \ref{fig:alpha}, has the best performance. For small $\alpha$ such as 0.2 and 0.4, the DCA-springback is not satisfactory because the springback penalty performs similarly to the $\ell_1$ penalty. For $\alpha=1$, its performance is also inferior since the convergence condition of the DCA-springback or the posterior verification \eqref{equ:alpha} can be easily violated with a large $\alpha$. We refer to the latter reason as the ``violating behavior'' of the DCA-springback. An ``unsuccessful'' trial is recognized due to unsatisfactory (but reasonable) recovery or violating behavior. Thus, success rates cannot fully reflect ``violating behavior,'' and we also plot the relative errors in Figure \ref{fig:alpha}. Indeed, the ``violating behavior'' often occurs when $s$ becomes large. Performance of $\alpha=0.8$ and $1$ is generally inferior, and also there are few such cases when $\alpha=0.6$. Thus, we adopt a \texttt{safeguard} for $\alpha=0.7$, a compromise between $0.6$ and $0.8$. If $\alpha=0.7$ violates the condition \eqref{equ:alpharequirement}, then we replace 0.7 with the largest constant complying with this condition \eqref{equ:alpharequirement}. That is, we choose $\alpha=\min\left\{0.7,2\sigma_{\min}(A)/(\|b\|_2+\tau)\right\}$. Success rates and relative errors with \texttt{safeguarded} $\alpha=0.7$ are also displayed in Figure \ref{fig:alpha}, indicating that there is no violating behavior.
\begin{figure}[htbp]
  \centering
  \includegraphics[width=\textwidth]{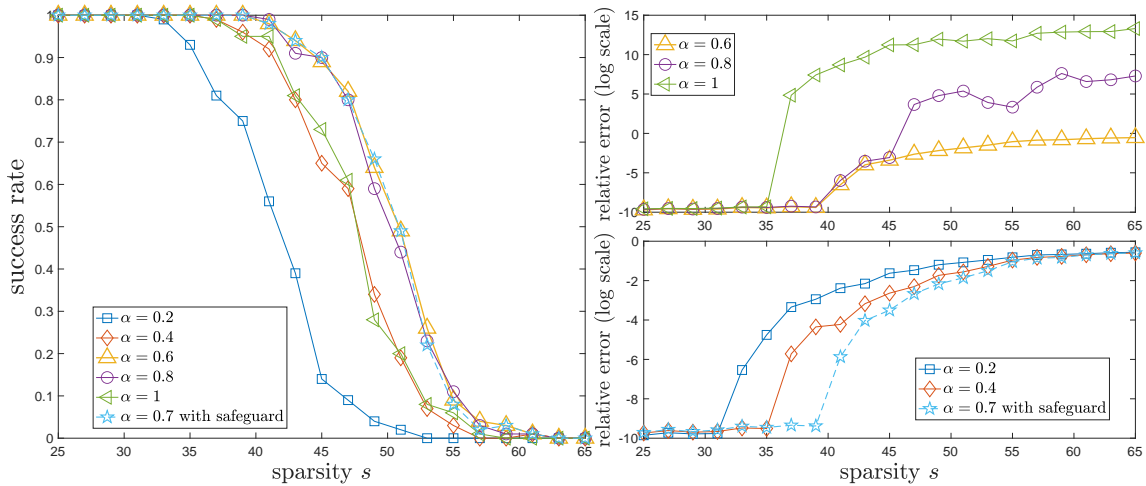}
  \caption{Success rates and relative errors in a natural logarithmic scale of recovery using DCA-springback under $128\times 512$ random Gaussian sensing matrix, with various $\alpha$.}\label{fig:alpha}
\end{figure}

Though a reasonable upper bound of $\alpha$ is needed, behaviors for $\alpha=0.2$ and $0.4$ suggest that a lower bound for $\alpha$ should be taken to maintain the satisfactory performance of the DCA-springback in terms of success rates. Especially if $A$ is ill-conditioned in the sense that its singular values lie within a wide range of values, i.e., $\sigma_{\min}(A)$ could be very small, then the condition on $\alpha$ could be pretty stringent. To maintain the success rates of the DCA-springback, we adopt an \texttt{efficiency detection} step as follows. If the condition number $\text{cond}(A):=\sigma_{\max}(A)/\sigma_{\min}(A)$ is greater than 5 (or other values set by the user), then we start an \texttt{efficiency detection} to enforce $\alpha$ to be greater than an \emph{efficiency detection factor} $\omega$. Thus, we suggest choosing $\alpha$ as the following subroutine:
\begin{equation}\label{equ:subroutine}
\alpha = \begin{cases}
\min\left\{0.7,2\sigma_{\min}(A)/(\|b\|_2+\tau)\right\}, & \text{if }\text{cond}(A)\leq 5,\\
\max\left\{\omega,\min\{0.7,2\sigma_{\min}(A)/(\|b\|_2+\tau)\}\right\},& \text{otherwise}.
\end{cases}\end{equation}
In short, the \texttt{safeguard} step suffices to guarantee convergence of the DCA-springback; and the \texttt{efficiency detection} step is adopted to maintain the success rates of the DCA-springback for ill-conditioned sensing matrices.

\subsection{Exact recovery of sparse vectors}\label{sec:noisefreeexperiment}
We first compare the DCA-springback with some state-of-the-art solvers mentioned above for noise-free measurements. We consider both the incoherent and coherent sensing matrices, respectively.

\textbf{Tests on incoherent matrices.} We first consider a ground-truth vector and display its reconstructions by the ADMM-$\ell_1$, the DCA-TL1, the DCA-${\ell_{1-2}}$, the DCA-MCP, and the DCA-springback. Let the sensing matrix $A\in\mathbb{R}^{m\times n}$ be a random Gaussian matrix with $(m,n)=(64,250)$, and the ground-truth $\bar{x}\in\mathbb{R}^{250}$ be a $22$-sparse vector with nonzero entries drawn from the standard normal distribution and set the efficiency detection factor as $\omega= 0.5$. The ground-truth and its reconstructions are displayed in Figure \ref{fig:groundtruth}. We see that the DCA-springback, the DCA-MCP, and the DCA-TL1 produce better reconstructions than the ADMM-$\ell_1$ and the DCA-$\ell_{1-2}$.

\begin{figure}[htbp]
  \centering
  \includegraphics[width=\textwidth]{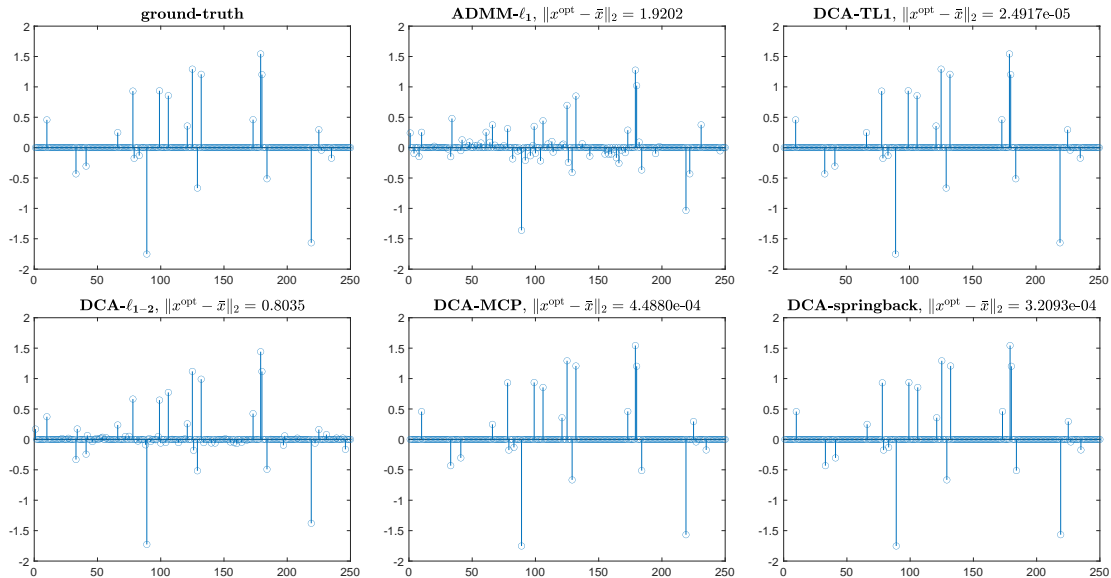}
  \caption{A ground-truth and its reconstructions using random Gaussian sensing matrices and noise-free measurements.}\label{fig:groundtruth}
\end{figure}

We then conduct a more comprehensive study and involve more solvers. We choose the sensing matrix $A\in\mathbb{R}^{m\times n}$ as a random Gaussian matrix and random partial DCT matrices with $(m,n)=(64,160)$, $(64,320)$, and $(64,640)$, and set the efficiency detection factor as $\omega= 0.5$. Different levels of sparsity $s$ varying among $\{6,8,10,\ldots,40\}$ are tested. The success rates of each solver are plotted in Figure \ref{fig:incoherent}. For both the Gaussian and partial DCT matrices, the IRLS-$\ell_p$ with $p = 0.5$ has the best performance, followed by the DCA-TL1, the DCA-MCP, and the DCA-springback. In particular, the performances of the DCA-MCP and the DCA-springback are very close because we let the parameter $\mu$ in the MCP be $1/\alpha$. The DCA-$\ell_{1-2}$ performs moderately well,  outperforming both the ADMM-$\ell_1$ and the AIHT. Our numerical results are consistent with some observations in the literature (e.g., \cite{yin2015minimization,zhang2018minimization}).
\begin{figure}[htbp]
  \centering
  \includegraphics[width=\textwidth]{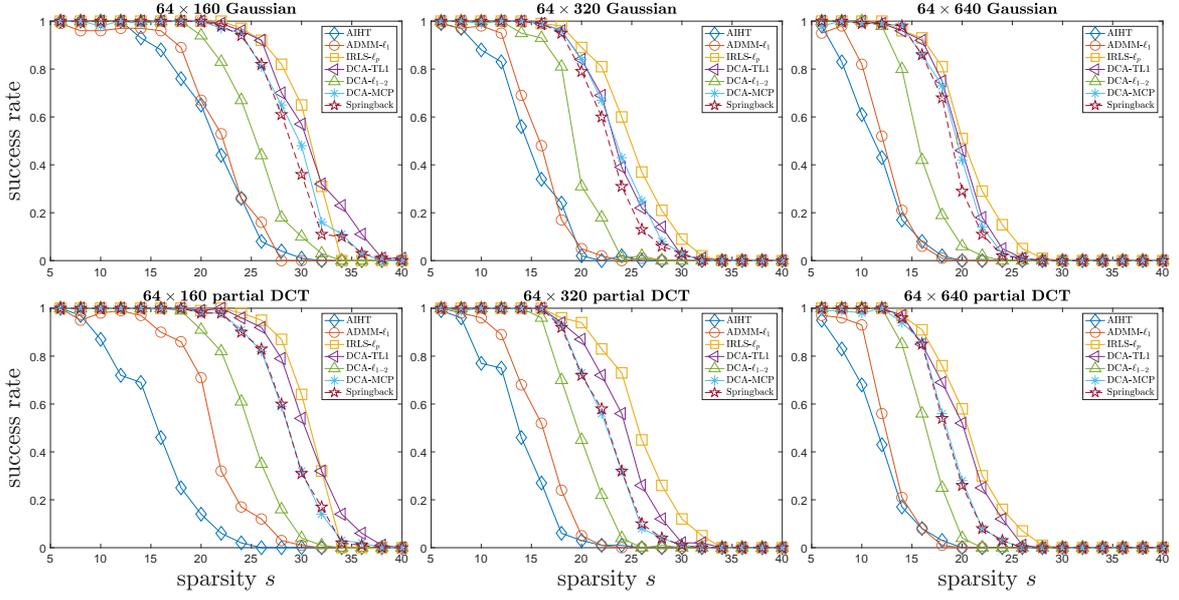}
  \caption{Success rates using random Gaussian and partial DCT sensing matrices.}\label{fig:incoherent}
\end{figure}

\textbf{Tests on coherent matrices.} Now, we choose the sensing matrix $A\in\mathbb{R}^{100\times 1500}$ as a randomly oversampled partial DCT matrix with various refinement factors $\mathcal{F}=4,6,8,10,12,16$ and minimum separation $L=2\mathcal{F}$, with the sparsity $s$ varying among $\{5,7,9,\ldots,35\}$. The efficiency detection factor is set as $\omega=0.5$. The success rates of each solver are plotted in Figure \ref{fig:coherent}. This figure  suggests that the DCA-TL1, the DCA-MCP, and the DCA-springback are robust regardless of the varying coherence of sensing matrix $A$. Moreover, when the coherent of $A$ is modest, e.g. $\mathcal{F}=6,8$, the DCA-MCP and the DCA-springback perform better than others. In the coherent regime, the DCA-springback is comparable with the DCA-$\ell_{1-2}$, and it outperforms the DCA-TL1, the ADMM-$\ell_1$, the IRLS-$\ell_p$, and the AIHT. However, the best-performance solver IRLS-$\ell_p$ in the incoherent regime becomes inefficient as $A$ becomes coherent.

\begin{figure}[htbp]
  \centering{}
  \includegraphics[width=\textwidth]{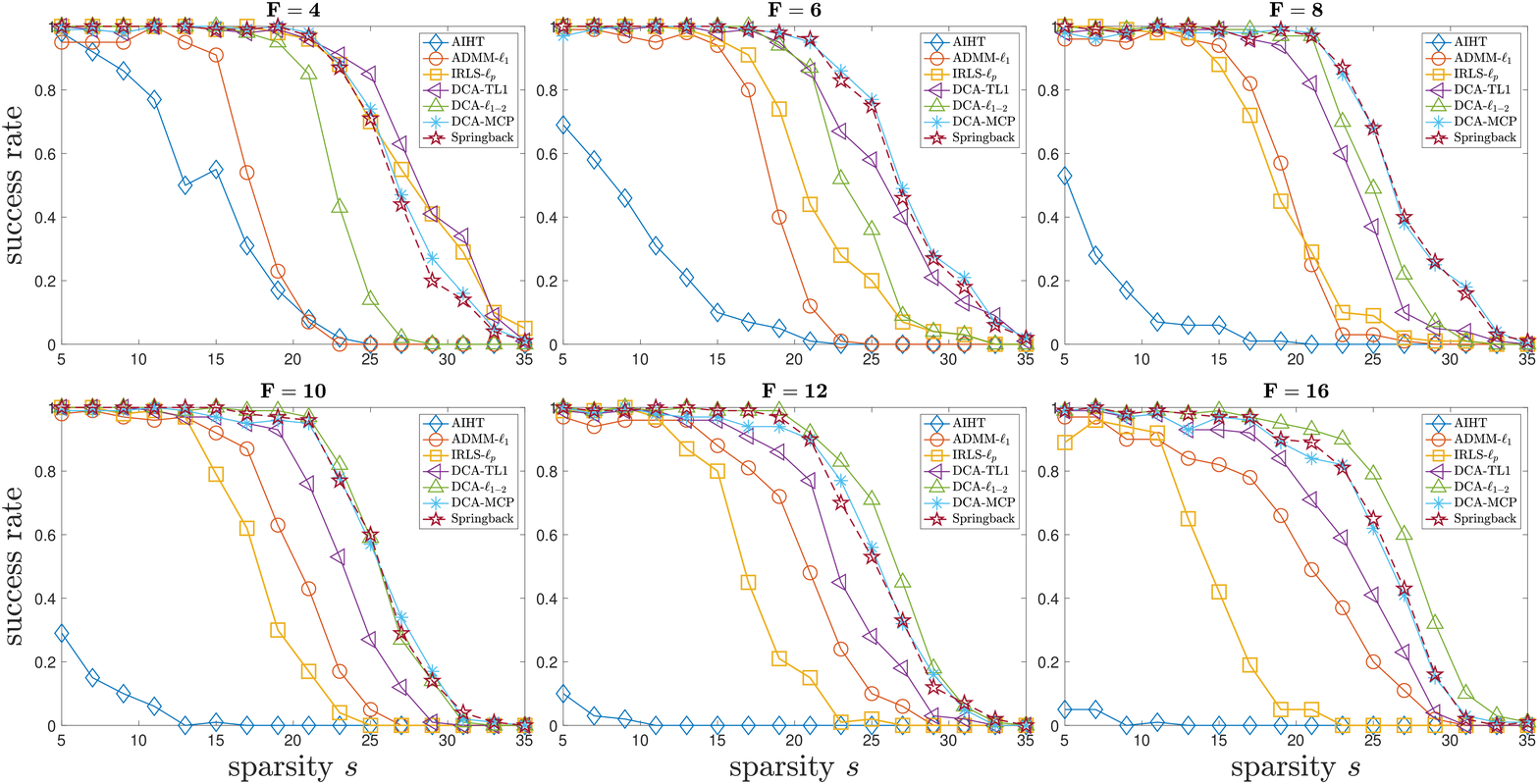}
  \caption{Success rates using randomly oversampled partial DCT matrices in $\mathbb{R}^{100\times1500}$.}\label{fig:coherent}
\end{figure}

\subsection{Robust recovery in the presence of noise}\label{sec:noisyexperiment}
We then consider noisy measurements. The noisy measurements $b$ are obtained by \texttt{b = awgn(A$\bar{\rm x}$,snr)}, a subroutine of the MATLAB Communication Toolbox, where \texttt{snr} corresponds to the value of signal-to-noise ratio (SNR) measured in dB. The larger the value of SNR is, the lighter the noise is added on.

We first consider a ground-truth vector with noisy measurements and display its reconstructions by the ADMM-$\ell_1$, the DCA-TL1, the DCA-${\ell_{1-2}}$, the DCA-MCP, and the DCA-springback. Let the sensing matrix $A\in\mathbb{R}^{m\times n}$ be a random Gaussian matrix with $(m,n)=(64,250)$, and the ground-truth $\bar{x}\in\mathbb{R}^{250}$ be a $20$-sparse vector with nonzero entries drawn from the standard normal distribution and set the efficiency detection factor as $\omega= 0.4$. The measurement vector $b= A\bar{x}$ is contaminated by 30 dB noise. The ground-truth and its reconstructions are displayed in Figure \ref{fig:groundtruthnoisy}. In particular, we see that the DCA-springback works better on small perturbations than the other solvers.

\begin{figure}[htbp]
  \centering
  \includegraphics[width=\textwidth]{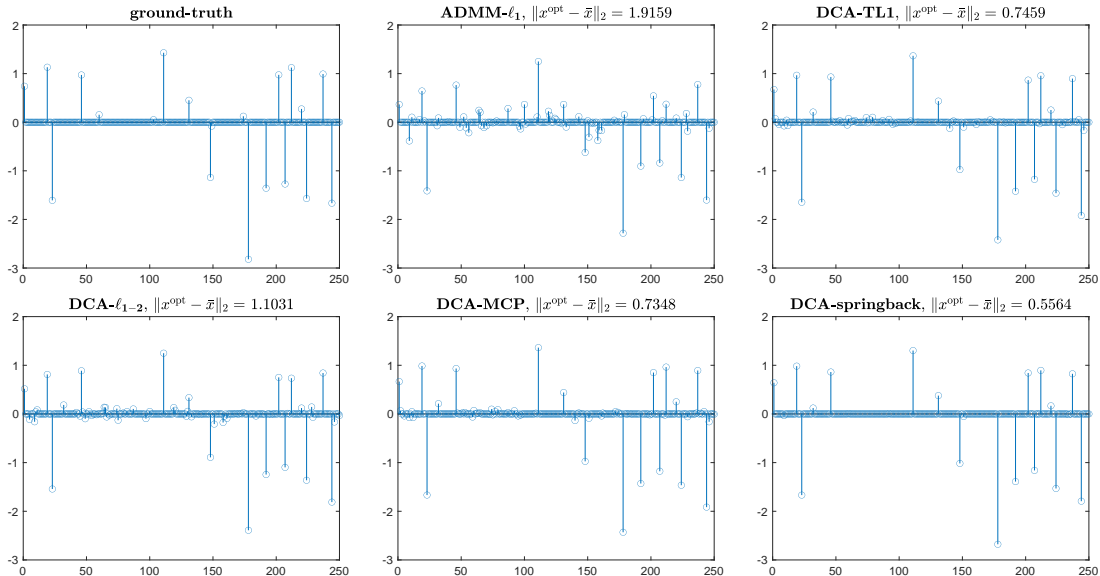}
  \caption{A ground-truth and its reconstructions using random Gaussian sensing matrices and noisy measurements.}\label{fig:groundtruthnoisy}
\end{figure}

We test both the random Gaussian matrix and the randomly oversampled partial DCT matrix with different levels of noise in dB. For Gaussian measurements, we choose $n = 64$, $m = 128$, and $s = 25$. For the oversampled partial DCT measurements, we test $n = 1500$, $m = 128$, $s = 30$, and $\mathcal{F} = 8$. We run 100 times for each scenario and record the average errors. The efficiency detection factor is set as $\omega=0.4$.

Once we adopt the \texttt{efficiency detection} step, a single ``violating behavior'' could lift the mean error to a pretty large level. To overcome this computational myopia, we only reserve the accepted results, where a result of the DCA-springback is considered ``accepted'' if the absolute error $\|x^*-\bar{x}\|_2$ is ten times less than the absolute error of the ADMM-$\ell_1$. In addition to errors displayed in Figure \ref{fig:noisy}, we report the \emph{acceptance rates} of the DCA-springback, which are ratios of the number of accepted trials over 100.

According to our experiments, there are no ``violating behaviors'' with the Gaussian measurements. However, there are a few cases with the oversampled partial DCT measurements when the noise level is relatively large. To illustrate the necessity of the \texttt{efficiency detection} step and to validate the convergence condition \eqref{equ:alpharequirement}, we test the DCA-springback without the \texttt{efficiency detection} for the randomly oversampled partial DCT measurements, and we do not remove unaccepted trials. The results are labeled as ``DCA-springback w/o effcy det.'' in Figure \ref{fig:noisy}, as we see that the DCA-springback only performs slightly better than the ADMM-$\ell_1$.

\begin{figure}[htbp]
  \centering
  \includegraphics[width=\textwidth]{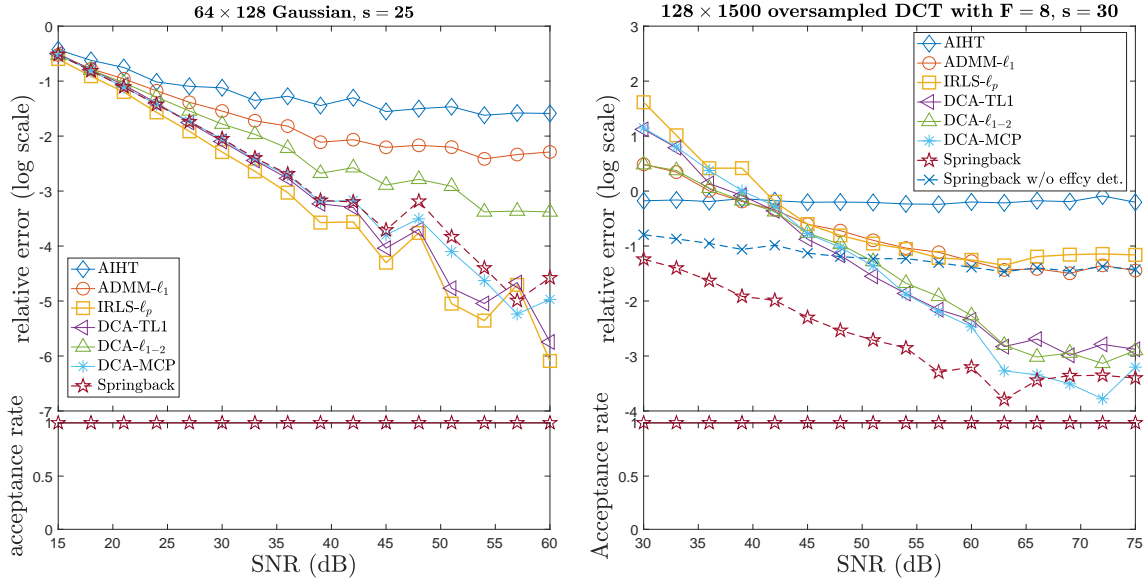}
  \caption{Robust recovery results with randomly Gaussian and oversampled partial DCT measurements.}\label{fig:noisy}
\end{figure}

Figure \ref{fig:noisy} shows that the DCA-$\ell_{1-2}$ and the IRLS-$\ell_p$ are still sensitive to the coherence of $A$. For Gaussian measurements, the IRLS-$\ell_p$ with $p = 0.5$ has the best performance, followed by the DCA-TL1, the DCA-MCP, the DCA-springback, the DCA-$\ell_{1-2}$, and the ADMM-$\ell_1$. For oversampled DCT measurements, the DCA-springback appears to be the best solver, followed by the DCA-MCP, the DCA-$\ell_{1-2}$, and the DCA-TL1, because the noise level is considered in solving the subproblems of the DCA-springback. In both cases, the DCA-springback consistently performs better than the ADMM-$\ell_1$ and the DCA-$\ell_{1-2}$. AIHT appears not to perform well for both matrices. According to the plots of the DCA-springback and the DCA-springback without the \texttt{efficiency detection}, the model parameter $\alpha$ matters for the same solver.

We also validate some theoretical results proved in Section \ref{sec:stable3}, with Gaussian measurements perturbed by 45 dB noise. We first study $m=50$, $n=160$, and $s$ varying among $\{10,11,\ldots,40\}$, and then consider $n=160$, $s=20$, and $m$ varying among $\{50,51,\ldots,120\}$. Errors of the ADMM-$\ell_1$, the DCA-$\ell_{1-2}$, and the DCA-springback are plotted in Figure \ref{fig:noisy2}, and the acceptance rates of the DCA-springback are also displayed. According to our analysis in Section \ref{sec:stable3}, for an RIP sensing matrix $A$ and an $s$-sparse $\bar{x}$, when $s\leq C$ ($C$ is given in \eqref{equ:C}) or $m$ is limited by some constant, the estimation \eqref{equ:stablerecoverynonsparse} of the springback-penalized model is tighter than the estimation \eqref{equ:l1nonsparserecovery} of the $\ell_1$- and $\ell_{1-2}$-penalized models in the sense of \eqref{equ:comp}. We see in the left plot of Figure \ref{fig:noisy2} that the error of the DCA-springback is less than the others for small $s$, and it becomes larger than the others when $s$ exceeds some constant. The right plot also indicates that the error of the DCA-springback is less than the others when $m$ is relatively small.
\begin{figure}[htbp]
  \centering
  \includegraphics[width=\textwidth]{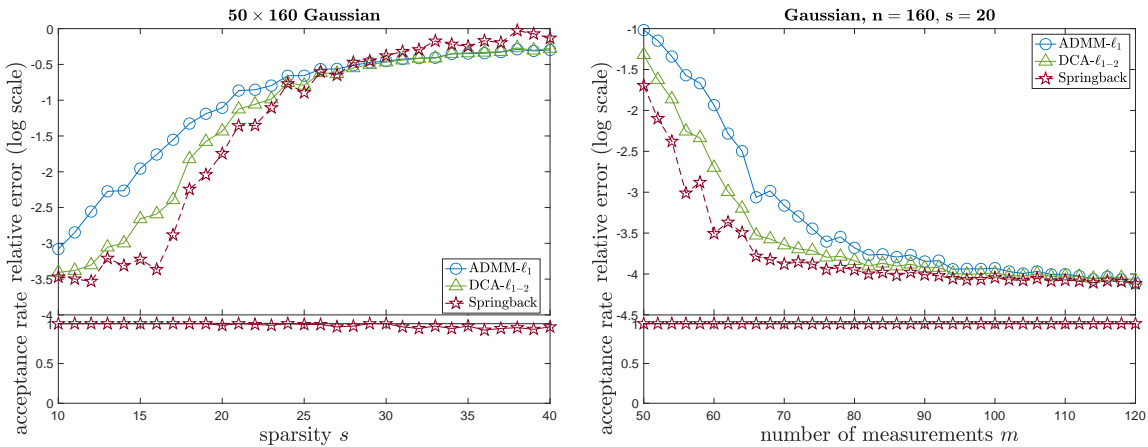}
  \caption{Numerical validation of theoretical results in Section \ref{sec:stable3}.}\label{fig:noisy2}
\end{figure}

\subsection{Remarks on numerical results}
As observed in the literature, recovery results by different models may vary for different scenarios, and no one can unanimously outperform all the others for all scenarios. For instance, the IRLS-$\ell_p$ prevails in the incoherent regime but quickly fails in the coherent regime, see \cite{lai2013improved,yin2015minimization}. For incoherent sensing matrices, the IRLS-$\ell_p$ and the DCA-TL1 perform better than the DCA-$\ell_{1-2}$ and the ADMM-$\ell_1$, while the DCA-$\ell_{1-2}$ performs the best for coherent sensing matrices; see \cite{yin2015minimization,zhang2018minimization}. The DCA-TL1 is robust, and it performs well for both incoherent and coherent sensing matrices, while it is less efficient than either the IRLS-$\ell_p$ in the incoherent regime or the DCA-$\ell_{1-2}$ in the coherent regime.

Together with these known facts and our numerical observations, we have the following remarks on the numerical performance of the DCA-springback.
\begin{itemize}
       \item For an incoherent sensing matrix: the DCA-springback performs slightly worse than the IRLS-$\ell_p$ and the DCA-TL1;
       \item For a coherent sensing matrix: the DCA-springback performs slightly worse than the DCA-$\ell_{1-2}$ but better than the DCA-TL1.
       \item For a sensing matrix with modest coherence: the DCA-springback performs comparably with the DCA-MCP, and they perform better than the others.
\end{itemize}
Similar comparison results are also observed when the measurements are contaminated by some noise.
For all the three scenarios, the DCA-springback and the DCA-MCP perform comparably if the parameter $\mu$ of the MCP is set as $1/\alpha$, and their performances with well-tuned parameters are also comparable. Moreover, we see that only the DCA-springback, the DCA-MCP, and the DCA-TL1 are robust with respect to the coherence of the sensing matrix. The DCA-springback and the DCA-MCP perform better than the DCA-TL1 in the coherent regime but worse in the incoherent regime. When the coherence of the sensing matrix is unknown, for example, when the sensing hardware cannot be modified or upgraded, coherence-robust algorithms such as the DCA-springback and the DCA-MCP are preferred for signal recovery.

\section{Conclusion}\label{sec:conclusions}

We proposed a weakly convex penalty, named the springback penalty, for signal recovery from incomplete and inaccurate measurements. The springback penalty inherits major theoretical and numerical advantages from the convex $\ell_1$ penalty and its various non-convex alternatives. We established exact and stable recovery results for the springback-penalized model \eqref{equ:CSproblem} under the same RIP condition as the BP model \eqref{equ:l1}; both the sparse and nearly sparse signals are considered. The springback-penalized model \eqref{equ:CSproblem} is particularly suitable for signal recovery with a large level of noise or a limited number of measurements. We verified the effectiveness of the model and its computational tractability. The springback penalty provides a new tool to construct effective models for various sparsity-driven recovery problems arising in many areas such as compressed sensing, signal processing, image processing, and least-squares approximation.

\section*{Acknowledgement}
The authors are grateful to the anonymous referees for their very valuable comments which have helped them improve this work substantially.

\bibliographystyle{siam}
\bibliography{myref}
\clearpage

\end{document}